\documentclass[12pt]{amsart}
\usepackage{amssymb,hyperref}

\makeatletter
\@namedef{subjclassname@2010}{%
  \textup{2010} Mathematics Subject Classification}
\makeatother

\newtheorem{theorem}{Theorem}[section]
\newtheorem{corollary}[theorem]{Corollary}
\newtheorem{lemma}[theorem]{Lemma}

%% A numbered theorem with a fancy name:

%% Numbered objects of "non-theorem" style (text roman):

\theoremstyle{definition}

%% An unnumbered object:

%% Equations numbered by section (optional):

\numberwithin{equation}{section}

%%%%%%%%%%% For IMPAN journals:

\frenchspacing

\textwidth=13.5cm
\textheight=23cm
\parindent=16pt
\oddsidemargin=-0.5cm
\evensidemargin=-0.5cm
\topmargin=-0.5cm

%%%----------------------------------------------------------------------------
%%%% Put your macros here:
\newcommand{\cA}{\mathcal{A}}
\newcommand{\cB}{\mathcal{B}}
\newcommand{\cD}{\mathcal{D}}
\newcommand{\cE}{\mathcal{E}}
\newcommand{\cP}{\mathcal{P}}

\newcommand{\N}{\mathbb{N}}
\newcommand{\R}{\mathbb{R}}
\newcommand{\Z}{\mathbb{Z}}

\newcommand{\eps}{\varepsilon}

%%%----------------------------------------------------------------------------
\begin{document}
\baselineskip=17pt

\title[Hardy-Littlewood maximal operator]
{Hardy-Littlewood maximal operator\\
on reflexive variable Lebesgue spaces\\ 
over spaces of homogeneous type}

\author[A. Karlovich]{Alexei Karlovich}
\address{Centro de Matem\'atica e Aplica\c{c}\~oes,
                   Departamento de Matem\'a\-tica, 
                   Faculdade de Ci\^encias e Tecnologia,
                   Universidade Nova de Lisboa,
                   Quinta da Torre, 
                   2829--516 Caparica, Portugal}
\email{oyk@fct.unl.pt}

%\author[]{}
%\address{}
%\email{}

\date{}
%%%----------------------------------------------------------------------------
\begin{abstract}
We show that the Hardy-Littlewood maximal operator is bounded on a reflexive 
variable Lebesgue space $L^{p(\cdot)}$ over a space of homogeneous type 
$(X,d,\mu)$ if and only if it is bounded on its dual space $L^{p'(\cdot)}$,
where $1/p(x)+1/p'(x)=1$ for $x\in X$. This result extends the corresponding 
result of Lars Diening from the Euclidean setting of $\mathbb{R}^n$ to the 
setting of spaces of homogeneous type $(X,d,\mu)$. 
\end{abstract}

\subjclass[2010]{Primary 43A85; Secondary 46E30}

\keywords{Hardy-Littlewood maximal operator, variable Lebesgue space, 
space of homogeneous type, dyadic cubes.}

\maketitle
%%%----------------------------------------------------------------------------
\section{Introduction}
We begin with the definition of a space of homogeneous type (see, e.g., 
\cite{C90a}). Given a set $X$ and a function $d:X\times X\to[0,\infty)$, one 
says that $(X,d)$ is a quasi-metric space if the following axioms hold:
\begin{enumerate}
\item[(a)] $d(x,y)=0$ if and only if $x=y$;
\item[(b)] $d(x,y)=d(y,x)$ for all $x,y\in X$;
\item[(c)]  for all $x,y,z\in X$ and some 
constant $\kappa\ge 1$,
%%%
\begin{equation}\label{eq:quasi-triangle}
d(x,y)\le \kappa(d(x,y)+d(y,z)).
\end{equation}
%%%
\end{enumerate}
For $x\in X$ and $r>0$, consider the ball $B(x,r)=\{y\in X:d(x,y)<r\}$
centered at $x$ of radius $r$. Given a quasi-metric space $(X,d)$ and a 
positive measure $\mu$ that is defined on the $\sigma$-algebra generated by
quasi-metric balls, one says that $(X,d,\mu)$ is a space of homogeneous type
if there exists a constant $C_\mu\ge 1$ such that for any $x\in X$ and any 
$r>0$,
%%%
\begin{equation}\label{eq:doubling-measure}
\mu(B(x,2r))\le C_\mu \mu(B(x,r)).
\end{equation}
%%%
To avoid trivial measures, we will always assume that $0<\mu(B)<\infty$
for every ball $B$. Consequently, $\mu$ is a $\sigma$-finite measure.

Given a complex-valued function $f\in L^1_{\rm loc}(X,d,\mu)$, we define its 
Hardy-Littlewood maximal function $Mf$ by 
\[
(Mf)(x):=\sup_{B\ni x}\frac{1}{\mu(B)}\int_B |f(x)|\,d\mu(x),
\quad x\in X,
\]
where the supremum is taken over all balls $B\subset X$ containing $x\in X$.
The Hardy-Littlewood maximal operator $M$ is a sublinear operator acting by
the rule $f\mapsto Mf$.

Let $L^0(X,d,\mu)$ denote the set of all complex-valued measurable functions
on $X$ and let $\cP(X)$ denote the set of all measurable a.e. finite functions
$p:X\to[1,\infty]$. For a measurable set $E\subset X$, put
\[
p_-(E):=\operatornamewithlimits{ess\,inf}_{x\in E} p(x),
\quad
p_+(E):=\operatornamewithlimits{ess\,sup}_{x\in E} p(x)
\]
and
\[
p_-:=p_-(X),
\quad
p_+:=p_+(X).
\]
For a function $f\in L^0(X,d,\mu)$ and $p\in\cP(X)$, consider the functional,
which is called modular, given by
\[
\varrho_{p(\cdot)}(f):=\int_X |f(x)|^{p(x)}\,d\mu(x).
\]

By definition, the variable Lebesgue space $L^{p(\cdot)}(X,d,\mu)$ consists 
of all functions $f\in L^0(X,d,\mu)$ such that 
$\varrho_{p(\cdot)}(f/\lambda)<\infty$
for some $\lambda>0$ depending on $f$. It is a Banach space with respect
to the Luxemburg-Nakano norm given by
\[
\|f\|_{L^{p(\cdot)}}:=\inf\{\lambda>0\ :\ \varrho_{p(\cdot)}(f/\lambda)\le 1\}.
\]
If $p\in\cP(X)$ is constant, then $L^{p(\cdot)}(X,d,\mu)$ is nothing
but the standard Lebesgue space $L^p(X,d,\mu)$. Variable Lebesgue spaces 
are often called Naka\-no spaces. We refer to Maligranda's paper \cite{M11} 
for the role of Hidegoro Nakano in the study of variable Lebesgue spaces
and to the monographs \cite{CF13,DHHR11} for the basic properties of these
spaces. We only mention that the space $L^{p(\cdot)}(X,d,\mu)$
is reflexive if and only if $1<p_-,p_+<\infty$. In this case, the dual space
$[L^{p(\cdot)}(X,d,\mu)]^*$ is isomorphic to the space $L^{p'(\cdot)}(X,d,\mu)$,
where $p'\in\cP(X)$ is given by
\[
1/p(x)+1/p'(x)=1,\quad x\in X.
\]
(see, e.g., \cite[Propositin~2.79 and Corollary~2.81]{CF13}).

One of the central problems of Harmonic Analysis on variable Lebesgue spaces
is the problem of the boundedness of the Hardy-Littlewood maximal operator
$M$ on the space $L^{p(\cdot)}(X,d,\mu)$.  For a detailed history of this
problem, we refer to the monographs \cite{CF13,DHHR11,KMRS16}. 
We also mention that very recently Cruz-Uribe and Shukla 
\cite[Theorem~1.1]{CS18}  proved a sufficient condition for the boundedness 
of the fractional maximal operator $M_\alpha$, $0\le\alpha<1$, on reflexive
variable Lebesgue spaces  $L^{p(\cdot)}(X,d,\mu)$ over spaces of homogeneous
type, which includes the case of the Hardy-Littlewood maximal operator as a
 partial case when $\alpha=0$.

In 2005, Diening \cite[Theorem~8.1]{D05} (see also \cite[Theorem~5.7.2]{DHHR11})
among other things proved the following remarkable result: if $1<p_-(\R^n)$, 
$p_+(\R^n)<1$, then the Hardy-Littlewood maximal operator $M$ is bounded on 
the variable Lebesgue space $L^{p(\cdot)}(\R^n)$ if and only if it is bounded 
on its dual space $L^{p'(\cdot)}(\R^n)$. Recently Lerner 
\cite[Theorem~1.1]{L17} generalized this result to the setting of weighted
variable Lebesgue spaces $L_w^{p(\cdot)}(\R^n)$. The aim of this paper is 
to present a self-contained proof of the following extension of Diening's
theorem to the setting of spaces of homogeneous type.
%%%----------------------------------------------------------------------------
\begin{theorem}[Main result]
\label{th:main}
Let $(X,d,\mu)$ be a space of homogeneous type and $p\in\cP(X)$ be such that
$1<p_-,p_+<\infty$. The Hardy-Littlewood maximal operator $M$ is bounded on 
the variable Lebesgue space $L^{p(\cdot)}(X,d,\mu)$ if and only if it 
is bounded on its dual space $L^{p'(\cdot)}(X,d,\mu)$.
\end{theorem}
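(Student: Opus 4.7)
The strategy is to reformulate the boundedness of $M$ as a self-dual testing condition involving positive \emph{self-adjoint} linear averaging operators associated with finite pairwise disjoint families of dyadic cubes, and then exploit this self-adjointness together with the associate-space duality $(L^{p(\cdot)})^*\simeq L^{p'(\cdot)}$ (valid for $1<p_-\le p_+<\infty$) to transfer uniform boundedness between $L^{p(\cdot)}$ and $L^{p'(\cdot)}$.

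First I would pass from $M$ to a dyadic model. Using the Hyt\"onen-Kairema construction of finitely many adjacent systems of dyadic cubes $\cD^{(1)},\dots,\cD^{(T)}$ on $(X,d,\mu)$, every quasi-metric ball is comparable to a cube from some $\cD^{(t)}$. Defining the dyadic maximal operators $M^{\cD^{(t)}}f(x):=\sup_{Q\in\cD^{(t)},\,Q\ni x}\mu(Q)^{-1}\int_Q|f|\,d\mu$, one obtains the pointwise bounds
\[
C^{-1}\max_{1\le t\le T}M^{\cD^{(t)}}f(x)\le Mf(x)\le C\sum_{t=1}^T M^{\cD^{(t)}}f(x),
\]
so the boundedness of $M$ is equivalent to that of each $M^{\cD^{(t)}}$. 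From now on fix one such system and write $\cD=\cD^{(t)}$, $M^\cD=M^{\cD^{(t)}}$.

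The second step is the linearization. For each finite pairwise disjoint family $\cE\subset\cD$, set
\[
T_\cE f(x):=\sum_{Q\in\cE}\frac{\chi_Q(x)}{\mu(Q)}\int_Q f\,d\mu.
\]
Disjointness gives $T_\cE|f|\le M^\cD f$ pointwise, and a direct Fubini argument shows $\int_X(T_\cE f)g\,d\mu=\int_X f(T_\cE g)\,d\mu$, so $T_\cE$ is self-adjoint. For the reverse comparison I would use a dyadic Calder\'on-Zygmund stopping time: for each $k\in\Z$ let $\cE_k$ consist of the maximal dyadic cubes $Q\in\cD$ with $\mu(Q)^{-1}\int_Q|f|\,d\mu>2^k$. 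The families $\cE_k$ are disjoint, and doubling of $\mu$ together with maximality yields the two-sided comparison $2^k\chi_{\{M^\cD f>2^k\}}\le T_{\cE_k}|f|\le C_\mu\cdot 2^k\chi_{\{M^\cD f>2^k\}}$. A modular-level computation based on this estimate and on the hypothesis $1<p_-\le p_+<\infty$ (which provides the geometric decay needed to sum over $k$) then shows that $M^\cD$ is bounded on $L^{p(\cdot)}$ if and only if $\sup_\cE\|T_\cE\|_{L^{p(\cdot)}\to L^{p(\cdot)}}<\infty$, where the sup is taken over all finite disjoint subfamilies $\cE\subset\cD$. Since $T_\cE$ is self-adjoint, the associate-space duality gives that this sup equals (up to a constant) $\sup_\cE\|T_\cE\|_{L^{p'(\cdot)}\to L^{p'(\cdot)}}$, and running the same equivalence with $p'$ in place of $p$ closes the chain and yields the theorem.

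The main obstacle is the non-trivial direction of the linearization: deducing the boundedness of $M^\cD$ from uniform boundedness of the averaging operators $T_\cE$. In the constant-exponent setting this is an almost immediate weak-type/layer-cake argument, but for variable $p(\cdot)$ the norm $\|\cdot\|_{L^{p(\cdot)}}$ is not homogeneous in a way that interacts cleanly with a dyadic decomposition, so the argument must be carried out on the modular $\varrho_{p(\cdot)}$ with careful bookkeeping of how the geometric factors $2^k$, the doubling constant $C_\mu$, and the pointwise variability of $p(\cdot)$ combine. A secondary technical point is verifying, in the quasi-metric (rather than metric) setting, that the Hyt\"onen-Kairema cubes enjoy the parent-child doubling estimate $\mu(Q^*)\le C\mu(Q)$ and the ball-cube comparability required for the first reduction.
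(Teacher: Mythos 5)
Your overall architecture is Diening's original route (see \cite[Theorem 8.1]{D05} and \cite[Section 5.7]{DHHR11}): reduce $M$ to dyadic maximal operators $M^{\cD^t}$ via Hyt\"onen--Kairema, characterize boundedness of $M^\cD$ by uniform boundedness of the self-adjoint averaging operators $T_\cE$, and then transfer by the associate-space duality $[L^{p(\cdot)}]'\simeq L^{p'(\cdot)}$. This is genuinely different from the paper, which follows Lerner \cite{L17}: it invokes the abstract duality result of \cite{K19} (Theorem~\ref{th:RAE}), reducing the problem to verifying that $L^{p(\cdot)}$ satisfies the sparse-family condition $\cA_\infty$, and then verifies $\cA_\infty$ by a chain of reverse-H\"older-type lemmas (Lemmas~\ref{le:3}--\ref{le:6}). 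Both routes are known to lead to a proof of Diening's theorem; yours would give a self-contained treatment of the averaging class $\cA$, while the paper's buys a cleaner modular bookkeeping via sparse families.

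However, there is a genuine gap in the crucial step that you flag but do not actually carry out: deducing boundedness of $M^\cD$ from $\sup_\cE\|T_\cE\|_{L^{p(\cdot)}\to L^{p(\cdot)}}<\infty$. Your two-sided pointwise comparison $2^k\chi_{\{M^\cD f>2^k\}}\le T_{\cE_k}|f|\le C\,2^k\chi_{\{M^\cD f>2^k\}}$ is correct (using Lemma~\ref{le:measure-child-father}), but the proposed "layer-cake plus geometric decay over $k$" cannot close the argument. Already in the constant-exponent model, $\sum_k\|T_{\cE_k}|f|\|_p^p\approx\sum_k 2^{kp}\mu(\Omega_k)\approx\|M^\cD f\|_p^p$, so replacing each summand by $(\sup_\cE\|T_\cE\|)^p\|f\|_p^p$ yields a divergent series, not a bound; there is no geometric factor in $k$ to exploit without further input. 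The input that is actually needed is Diening's "key lemma": the class $\cA$ is self-improving (left-open), i.e., $p(\cdot)\in\cA$ with $p_->1$ forces $p(\cdot)/s\in\cA$ for some $s>1$, which is a nontrivial reverse-H\"older-type estimate obtained by iterating the Calder\'on--Zygmund stopping time together with the uniform bound on $T_\cE$. That estimate is precisely the content of the paper's Lemmas~\ref{le:3}--\ref{le:6} (extending \cite[Lemmas~4.1, 5.1--5.3]{L17}) and of \cite[Theorem~8.1]{D05}/\cite[Key~Lemma~5.7.3]{DHHR11}; it cannot be replaced by straightforward modular bookkeeping. As stated, your sketch asserts the equivalence "$M^\cD$ bounded $\Leftrightarrow$ $T_\cE$ uniformly bounded" without supplying this key ingredient, and that assertion is exactly as hard as the theorem itself.

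A secondary remark: in the quasi-metric setting the parent--child estimate $\mu(\widehat Q)\le\eps^{-1}\mu(Q)$ is indeed available (it is Lemma~\ref{le:measure-child-father}, proved in \cite{AW18,K19} from the doubling of $\mu$ and the ball--cube comparability \eqref{eq:dyadic-cubes}), so your two-sided comparison for $T_{\cE_k}$ is safe; the real obstacle is the key lemma, not the quasi-metric geometry.
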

%%%----------------------------------------------------------------------------
Our approach is based on the adaptation of Lerner's proof \cite{L17}, which is
heavily based on the Calder\'on-Zygmund decomposition and dyadic maximal 
functions in the Euclidean setting of $\R^n$, to the setting of spaces of
homogeneous type. This becomes possible thanks to the recently developed
techniques of dyadic decomposition of spaces of homogeneous type due to 
Hyt\"onen and Kairema \cite{HK12} (see also previous works by Christ
\cite{C90a,C90b}). Note that this techniques was successfully applied in 
\cite{AHT17,AW18,CS18,K19} for studying various problems on spaces of 
homogeneous type, to mention a few recent works (this list is far from 
being exhaustive).

The paper is organized as follows. In Section~\ref{sec:DD-SHT}, we describe
the construction by Hyt\"onen and Kairema \cite{HK12} of a system of 
adjacent dyadic grids on a space of homogeneous type. Elements of this system
are called dyadic cubes, they have many important properties of usual dyadic
cubes of $\R^n$. 

In Section~\ref{sec:condition-A-infinity}, we recall the
definition of Banach function spaces and the main result of \cite{K19}
(see also \cite[Theorem~3.1]{L17})
saying that if the Hardy-Littlewood maximal operator $M$ is bounded on a 
Banach function space $\cE(X,d,\mu)$, then its boundedness on the associate
space $\cE'(X,d,\mu)$ is equivalent to a certain condition $\cA_\infty$.
Since the variable Lebesgue space $L^{p(\cdot)}(X,d,\mu)$ is a Banach function
space, in order to prove Theorem~\ref{th:main}, it is sufficient to verify that
$L^{p(\cdot)}(X,d,\mu)$ satisfies the condition $\cA_\infty$.

In Section~\ref{sec:modular-condition-A-infinity}, we recall very useful
relations between the norm and the modular in a varaible Lebesgue space. 
This allows us to formulate a modular analogue of the condition $\cA_\infty$ 
and show that this modular analogue implies the (norm) condition $\cA_\infty$.
The rest of the paper is devoted to the verification of the modular
analogue of the condition $\cA_\infty$ (see Lemma~\ref{le:2}).

In Section~\ref{sec:preparations}, we prepare the proof of the main result,
extending \cite[Lemmas~5.1--5.3 and 4.1]{L17} with $w\equiv 1$ from the 
Euclidean setting of $\R^n$ to the setting of spaces of homogeneous type.
Finally, in Section~\ref{sec:proof-main}, we complete the proof of
Theorem~\ref{th:main} following the scheme of the proof of 
\cite[Theorem~1.1]{L17}.
%%%----------------------------------------------------------------------------
\section{Dyadic decomposition of spaces of homogeneous type.}
\label{sec:DD-SHT}
\subsection{Construction of Hyt\"onen and Kairema}
Let $(X,d,\mu)$ be a space of homogeneous type. The doubling property of $\mu$ 
implies the following geometric doubling property of the quasi-metric $d$:
any ball $B(x,r)$ can be covered by at most $N:=N(C_\mu,\kappa)$ balls of 
radius $r/2$. It is not difficult to show that $N\le C_\mu^{6+3\log_2\kappa}$.

An important tool for our proofs is the concepts of an adjacent system of 
dyadic grids $\cD^t$, $t\in\{1,\dots,K\}$, on  a space of homogeneous type 
$(X,d,\mu)$. Christ \cite[Theorem~11]{C90a} (see also 
\cite[Chap.~VI, Theorem~14]{C90b}) 
constructed a system of sets on  $(X,d,\mu)$, which satisfy many of the 
properties of a system of dyadic cubes on the Euclidean space. His 
construction was further refined by Hyt\"onen and 
Kairema  \cite[Theorem~2.2]{HK12}. We will use the version from 
\cite[Theorem~4.1]{AHT17}.
%%%----------------------------------------------------------------------------
\begin{theorem}\label{th:Hytonen-Kairema}
Let $(X,d,\mu)$ be a space of homogeneous type with the constant $\kappa\ge 1$
in inequality \eqref{eq:quasi-triangle} and the geometric doubling constant $N$.
Suppose the parameter $\delta\in(0,1)$ satisfies $96\kappa^2\delta\le 1$. Then 
there exist an integer number $K=K(\kappa,N,\delta)$, a countable set of points
$\{z_\alpha^{k,t}:\alpha\in\cA_k\}$ with $k\in\Z$ and  $t\in\{1,\dots,K\}$, 
and a finite number of dyadic grids 
\[
\cD^t:=\{Q_\alpha^{k,t}:k\in\Z,\alpha\in\cA_k\}, 
\]
such that the following properties are fulfilled:
\begin{enumerate}
\item[{\rm(a)}]
for every $t\in\{1,\dots,K\}$ and $k\in\Z$ one has
\begin{enumerate}
\item[{\rm (i)}]
$X=\bigcup_{\alpha\in\cA_k} Q_\alpha^{k,t}$ (disjoint union);

\item[{\rm (ii)}]
if $Q,P\in\cD^t$, then $Q\cap P\in\{\emptyset, Q,P\}$;

\item[{\rm (iii)}]
if $Q_\alpha^{k,t}\in\cD^t$, then
%%%
\begin{equation}\label{eq:dyadic-cubes}
B(z_\alpha^{k,t},c_1\delta^k)
\subset 
Q_\alpha^{k,t}
\subset 
B(z_\alpha^{k,t},C_1\delta^k),
\end{equation}
%%%
where $c_1=(12\kappa^4)^{-1}$ and $C_1:=4\kappa^2$;
\end{enumerate}

\item[{\rm(b)}]
for every $t\in\{1,\dots,K\}$ and every $k\in\Z$, if $Q_\alpha^{k,t}\in\cD^t$, 
then there exists at least one $Q_\beta^{k+1,t}\in\cD^t$, which is called a 
child of $Q_\alpha^{k,t}$, such that  $Q_\beta^{k+1,t}\subset Q_\alpha^{k,t}$, 
and there exists exactly one $Q_\gamma^{k-1,t}\in\cD^t$, which is 
called the parent of $Q_\alpha^{k,t}$, such that 
$Q_\alpha^{k,t}\subset Q_\gamma^{k-1,t}$;

\item[{\rm (c)}]
for every ball $B=B(x,r)$, there exists 
\[
Q_B\in\bigcup_{t=1}^K\cD^t
\]
such that $B\subset Q_B$ and $Q_B=Q_\alpha^{k-1,t}$ for some indices 
$\alpha\in\cA_k$ and $t\in\{1,\dots,K\}$, where $k$ is 
the unique integer such that 
\[
\delta^{k+1}<r\le\delta^k.
\]
\end{enumerate}
\end{theorem}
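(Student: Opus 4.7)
The plan is to follow the construction of Christ \cite{C90a,C90b}, refined by Hyt\"onen and Kairema \cite{HK12}, in the formulation recorded in \cite[Theorem~4.1]{AHT17}. The work naturally splits into two conceptually distinct stages: first, for a fixed $t$, produce a single dyadic grid $\cD^t$ enjoying properties (a) and (b); second, arrange $K$ such grids so that the adjacency property (c) holds.

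For the single-grid stage, I would first, for each $k\in\Z$, use Zorn's lemma to select a maximal $\delta^k$-separated set of ``centers'' $\{z_\alpha^{k,t}:\alpha\in\cA_k\}\subset X$. Maximality forces the balls $B(z_\alpha^{k,t},\delta^k)$ to cover $X$, while $\delta^k$-separation makes the balls of half that radius pairwise disjoint modulo $\kappa$. Next, I would inductively define a parent map $\pi_k:\cA_{k+1}\to\cA_k$ by sending $\alpha$ to an index minimizing $d(z_\alpha^{k+1,t},z_\beta^{k,t})$, with ties broken arbitrarily; this yields a tree across scales. The cubes $Q_\alpha^{k,t}$ are then built by first assigning each point of $X$ to the nearest center at each scale and then reconciling the assignments up the parent tree so that the cubes nest across generations. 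Properties (a)(i), (a)(ii), (b), and the two-sided ball inclusion (a)(iii) would then follow from repeated application of the quasi-triangle inequality \eqref{eq:quasi-triangle}; the hypothesis $96\kappa^2\delta\le 1$ is precisely the numerical condition needed to force the outer inclusion at scale $k$ to fit inside the inner inclusion at scale $k-1$ and thus to yield the constants $c_1=(12\kappa^4)^{-1}$ and $C_1=4\kappa^2$.

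For the adjacent system, the issue is that a single Christ grid fails (c), because a given ball may straddle the boundary of every cube of the natural scale. Following Hyt\"onen-Kairema, I would produce $K$ ``shifted'' grids by running the above construction with $K$ different families of base centers, chosen jointly so that for every $x\in X$ and every scale $k$ at least one of the $K$ nets has a center within a small prescribed fraction of $\delta^k$ of $x$. Given an arbitrary ball $B(x,r)$ with $\delta^{k+1}<r\le\delta^k$, the inner inclusion in \eqref{eq:dyadic-cubes} at scale $k-1$ together with a quasi-triangle estimate then forces the cube $Q_\alpha^{k-1,t}$ containing such a nearby center to contain the entire ball, which is exactly (c). The number of shifts $K$ needed is controlled by a covering-number argument at scale $\delta^{k-1}$, bounded purely in terms of $\kappa$, the geometric doubling constant $N$, and $\delta$.

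The main obstacle is (c). The single-grid stage is careful but essentially bookkeeping with the quasi-triangle inequality, whereas the quantitative adjacency statement requires both a clever choice of shifts and the explicit covering estimate based on geometric doubling in order to bound $K=K(\kappa,N,\delta)$. Beyond (c), the principal care needed throughout is to track the quasi-metric constant $\kappa$ in every inequality, since the usual triangle inequality is unavailable and the constants accumulate rapidly; this is exactly why a hypothesis as restrictive as $96\kappa^2\delta\le 1$ is imposed at the outset.
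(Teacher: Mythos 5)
The paper does not prove Theorem~\ref{th:Hytonen-Kairema} at all: it is quoted verbatim from \cite[Theorem~4.1]{AHT17}, which is itself a refinement of Hyt\"onen--Kairema \cite[Theorems~2.2 and~4.1]{HK12} and ultimately of Christ \cite[Theorem~11]{C90a}. So there is no in-paper proof against which to compare your sketch.

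Taken on its own terms, your outline does identify the standard skeleton of the Hyt\"onen--Kairema construction: maximal $\delta^k$-separated nets at each scale, a parent map across scales, nested ``cubes'' built by reconciling assignments along the resulting tree, and then a finite family of grids engineered so that every ball is swallowed by a cube of comparable sidelength in at least one of them. Two cautionary remarks, since you only offer a sketch. First, the role you assign to the hypothesis $96\kappa^2\delta\le 1$ (forcing ``the outer inclusion at scale $k$ to fit inside the inner inclusion at scale $k-1$'') is not quite accurate: the enclosing balls of a cube and its parent generally overlap rather than nest, and the smallness of $\delta$ is used for a more involved chain of quasi-triangle estimates in the reconciliation step; the constants $c_1=(12\kappa^4)^{-1}$, $C_1=4\kappa^2$ emerge from that bookkeeping and are not obtained by a single two-scale comparison. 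Second, your treatment of part (c) as ``running the construction with $K$ different families of base centers'' is the right heuristic but substantially understates the difficulty: in \cite{HK12} the adjacent systems are not produced by literally shifting the nets, but by a considerably more delicate argument that generates all $K$ grids from one fixed family of reference points together with a bounded set of admissible labelings, with $K$ controlled via geometric doubling. If you were actually asked to prove this theorem rather than use it, part (c) would require a complete rerun of the Hyt\"onen--Kairema argument; the sensible move for this paper is exactly what the author does, namely cite it.
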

%%%----------------------------------------------------------------------------
The collections $\cD^t$, $t\in\{1,\dots,K\}$, are called dyadic grids on $X$. 
The sets $Q_\alpha^{k,t}\in\cD^t$ are referred to as dyadic cubes with center 
$z_\alpha^{k,t}$ and sidelength $\delta^k$, see \eqref{eq:dyadic-cubes}.
The sidelength of a cube $Q\in\cD^t$ will be denoted by $\ell(Q)$.
We should emphasize that these sets are not cubes in the standard sense even 
if the underlying space is $\R^n$. Parts (a) and (b) of the above theorem  
describe dyadic grids $\cD^t$, with $t\in\{1,\dots,K\}$, individually. 
In particular, 
\eqref{eq:dyadic-cubes} permits a comparison between a dyadic cube and
quasi-metric balls. Part (c) guarantees the existence of a finite family
of dyadic grids such that an arbitrary quasi-metric ball is contained in a 
dyadic cube in one of these grids. Such a finite family of dyadic grids is 
referred to as an adjacent system of dyadic grids. 
%%%----------------------------------------------------------------------------
\subsection{Dyadic maximal function}
Let $\cD\in\bigcup_{t=1}^K \cD^t$ be a fixed dyadic grid. One can define the 
dyadic maximal function $M^\cD f$ of a function $f\in L_{\rm loc}^1(X,d,\mu)$
by
\[
(M^\cD f)(x)=\sup_{Q\ni x}\frac{1}{\mu(Q)}\int_Q |f(x)|\,d\mu(x),
\quad x\in X,
\]
where the supremum is taken over all dyadic cubes $Q\in\cD$ containing $x$.

The following important theorem is proved by Hyt\"onen and Kairema 
\cite[Proposition~7.9]{HK12}.
%%%----------------------------------------------------------------------------
\begin{theorem}\label{th:HK-pointwise}
Let $(X,d,\mu)$ be a space of homogeneous type and let $\bigcup_{t=1}^K \cD^t$ 
be the adjacent system of dyadic grids given by Theorem~\ref{th:Hytonen-Kairema}.
There exist a constant $C_{HK}(X)\ge 1$ depending only $(X,d,\mu)$ such that 
for every function $f\in L_{\rm loc}^1(X,d,\mu)$ and a.e. $x\in X$, one has
\begin{align*}
(M^{\cD^t} f)(x)
&\le 
C_{HK}(X) (Mf)(x),\quad t\in\{1,\dots,K\},
\\
(Mf)(x)
&\le 
C_{HK}(X)\sum_{t=1}^K (M^{\cD^t}f)(x).
\end{align*}
\end{theorem}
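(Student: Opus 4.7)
The plan is to derive both inequalities directly from the sandwich property \eqref{eq:dyadic-cubes} in Theorem~\ref{th:Hytonen-Kairema}, from the covering property (c) there, and from iterated applications of the doubling condition \eqref{eq:doubling-measure}. The underlying principle is that each dyadic cube $Q_\alpha^{k,t}$ is quasi-metrically indistinguishable, up to a fixed dilation factor, from a ball centered at $z_\alpha^{k,t}$; once this is made precise, doubling absorbs all geometric constants into a single $C_{HK}(X)$.

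For the first inequality, fix $t\in\{1,\dots,K\}$, $x\in X$, and a dyadic cube $Q=Q_\alpha^{k,t}\in\cD^t$ containing $x$. By \eqref{eq:dyadic-cubes}, $B(z_\alpha^{k,t},c_1\delta^k)\subset Q\subset B(z_\alpha^{k,t},C_1\delta^k)$, and $x$ lies in the outer ball. Write the average of $|f|$ over $Q$ as $[\mu(B(z_\alpha^{k,t},C_1\delta^k))/\mu(Q)]$ times the average of $|f|$ over $B(z_\alpha^{k,t},C_1\delta^k)$. The first factor is at most $\mu(B(z_\alpha^{k,t},C_1\delta^k))/\mu(B(z_\alpha^{k,t},c_1\delta^k))$, which by iterating \eqref{eq:doubling-measure} a number of times depending only on $\log_2(C_1/c_1)$ is bounded by a constant $C=C(C_\mu,\kappa)$; the second factor is dominated by $(Mf)(x)$. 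Taking the supremum over dyadic cubes $Q\in\cD^t$ containing $x$ yields the first inequality.

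For the second inequality, fix $x\in X$ and a ball $B=B(y,r)\ni x$. Let $k\in\Z$ be the unique integer with $\delta^{k+1}<r\le\delta^k$ and apply property (c) to produce $t\in\{1,\dots,K\}$ and $Q_B=Q_\alpha^{k-1,t}\in\cD^t$ with $B\subset Q_B$. Since $x\in B\subset Q_B$, the average of $|f|$ over $B$ is bounded by $[\mu(Q_B)/\mu(B)]\cdot (M^{\cD^t}f)(x)$. By \eqref{eq:dyadic-cubes}, $Q_B\subset B(z_\alpha^{k-1,t},C_1\delta^{k-1})$; since $y\in B\subset Q_B$ gives $d(y,z_\alpha^{k-1,t})<C_1\delta^{k-1}$, the quasi-triangle inequality produces $Q_B\subset B(y,2\kappa C_1\delta^{k-1})\subset B(y,2\kappa C_1\delta^{-2}r)$, where the last inclusion uses $\delta^{k-1}<\delta^{-2}r$. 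Iterated doubling then gives $\mu(Q_B)\le C\mu(B)$, with $C$ depending only on $C_\mu$, $\kappa$ and $\delta$. Bounding $(M^{\cD^t}f)(x)$ by $\sum_{s=1}^K (M^{\cD^s}f)(x)$ and taking the supremum over balls $B\ni x$ finishes the proof.

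The only real obstacle is bookkeeping in the ball-swap of the second inequality: the asymmetry of the quasi-triangle inequality forces one to compare quasi-metric balls centered at different points, and the sidelength $\delta^{k-1}$ of $Q_B$ is slightly larger than the radius $r$ of $B$. However, since all radii in play remain within a fixed multiplicative factor of $r$, the passage through doubling costs only a fixed number of iterations and the constant $C_{HK}(X)$ depends only on $(X,d,\mu)$.
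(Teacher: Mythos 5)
Your proof is correct, and it supplies what the paper itself does not: the paper simply cites \cite[Proposition~7.9]{HK12} for this statement, so there is no in-text argument to compare against. Your argument is the natural, self-contained one, and all the steps check out.

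For the first inequality you exploit that $Q=Q_\alpha^{k,t}$ is sandwiched between the concentric balls $B(z_\alpha^{k,t},c_1\delta^k)$ and $B(z_\alpha^{k,t},C_1\delta^k)$ from \eqref{eq:dyadic-cubes}; the ratio of their measures is controlled by iterating \eqref{eq:doubling-measure} roughly $\log_2(C_1/c_1)=\log_2(48\kappa^6)$ times, and the outer ball contains $x$, so its average is dominated by $(Mf)(x)$. For the second inequality you take the enclosing dyadic cube $Q_B=Q_\alpha^{k-1,t}$ of $B=B(y,r)$ from part~(c), use $y\in B\subset Q_B\subset B(z_\alpha^{k-1,t},C_1\delta^{k-1})$ together with the quasi-triangle inequality to recenter and obtain $Q_B\subset B(y,2\kappa C_1\delta^{k-1})\subset B(y,2\kappa C_1\delta^{-2}r)$, and again a fixed number of doublings controls $\mu(Q_B)/\mu(B)$. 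All constants depend only on $C_\mu$, $\kappa$, and the fixed $\delta$ from Theorem~\ref{th:Hytonen-Kairema}, hence only on $(X,d,\mu)$, as required. (One small remark: you tacitly use the standard form $d(x,z)\le\kappa(d(x,y)+d(y,z))$ of the quasi-triangle inequality, which is the intended reading of \eqref{eq:quasi-triangle}; the displayed version in the paper contains an obvious typo.) Since the paper delegates this proof entirely to \cite{HK12}, your write-up is a genuine addition rather than a deviation, and it follows the expected sandwich-plus-doubling strategy.
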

%%%----------------------------------------------------------------------------
\subsection{Calder\'on-Zygmund decomposition of a cube}
The following result is a consequence of Theorem~\ref{th:Hytonen-Kairema}.
%%%----------------------------------------------------------------------------
\begin{lemma}\label{le:measure-child-father}
Suppose $(X,d,\mu)$ is a space of homogeneous type with the constants
$\kappa\ge 1$ in inequality \eqref{eq:quasi-triangle} and $C_\mu\ge 1$ in 
inequality \eqref{eq:doubling-measure}. Let $(X,d,\mu)$ be equipped with 
an adjacent system of dyadic grids $\{\cD^t,t=1,\dots,K\}$ and let 
$\delta\in(0,1)$ be chosen as in Theorem~\ref{th:Hytonen-Kairema}. Then there 
is an $\eps=\eps(\kappa,C_\mu,\delta)\in(0,1)$ such that for every 
$t\in\{1,\dots,K\}$ and all $Q,P\in\cD^t$, if $Q$ is a child of $P$, then 
\[
\mu(Q)\ge\eps\mu(P).
\]
\end{lemma}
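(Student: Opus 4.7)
The plan is to bracket both $Q$ and $P$ between concentric balls using \eqref{eq:dyadic-cubes}, then use the doubling condition \eqref{eq:doubling-measure} to compare the inscribed ball of $Q$ with a ball that circumscribes $P$ and is centered at the same point. All constants will be absorbed into a single $\eps$ depending only on $\kappa$, $C_\mu$, and $\delta$.

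First I would set $Q=Q_\beta^{k+1,t}$ and $P=Q_\alpha^{k,t}$ with $Q\subset P$, and denote their centers by $z_Q:=z_\beta^{k+1,t}$ and $z_P:=z_\alpha^{k,t}$. From \eqref{eq:dyadic-cubes} one has the two-sided enclosures
\[
B(z_Q,c_1\delta^{k+1})\subset Q,
\qquad
P\subset B(z_P,C_1\delta^k).
\]
Since $z_Q\in Q\subset P\subset B(z_P,C_1\delta^k)$, one obtains $d(z_Q,z_P)<C_1\delta^k$. The quasi-triangle inequality \eqref{eq:quasi-triangle} then yields, for every $y\in P$,
\[
d(z_Q,y)\le \kappa\bigl(d(z_Q,z_P)+d(z_P,y)\bigr) < 2\kappa C_1\delta^k,
\]
so that $P\subset B(z_Q,2\kappa C_1\delta^k)$.

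Next I would choose an integer $j=j(\kappa,\delta)$ with $2^j c_1\delta^{k+1}\ge 2\kappa C_1\delta^k$, that is,
\[
j\ge \log_2\!\frac{2\kappa C_1}{c_1\delta}.
\]
Iterating \eqref{eq:doubling-measure} exactly $j$ times about the center $z_Q$ gives
\[
\mu\bigl(B(z_Q,2\kappa C_1\delta^k)\bigr)\le \mu\bigl(B(z_Q,2^j c_1\delta^{k+1})\bigr)\le C_\mu^{\,j}\,\mu\bigl(B(z_Q,c_1\delta^{k+1})\bigr).
\]
Combining the three inclusions with monotonicity of $\mu$, one concludes
\[
\mu(P)\le \mu\bigl(B(z_Q,2\kappa C_1\delta^k)\bigr)\le C_\mu^{\,j}\,\mu(B(z_Q,c_1\delta^{k+1}))\le C_\mu^{\,j}\,\mu(Q),
\]
so that $\eps:=C_\mu^{-j}\in(0,1)$ depends only on $\kappa$, $C_\mu$, $\delta$ (recall $c_1=(12\kappa^4)^{-1}$ and $C_1=4\kappa^2$), as required.

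There is no real obstacle here beyond elementary bookkeeping: the only subtle point is remembering to pick up the factor of $\kappa$ from the quasi-triangle inequality when re-centering the circumscribed ball of $P$ at $z_Q$, which is what forces the ratio of radii (and hence the number of doubling iterations) to depend on $\kappa$ rather than merely on $\delta$.
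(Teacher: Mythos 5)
Your proof is correct. The paper itself does not supply a proof of this lemma but instead defers to \cite[Corollary~2.9]{AW18} and \cite[Lemma~8]{K19}; your argument---bracketing $Q$ and $P$ between concentric quasi-metric balls via \eqref{eq:dyadic-cubes}, re-centering the circumscribed ball of $P$ at $z_Q$ with the quasi-triangle inequality, and then iterating the doubling condition---is precisely the standard argument used in those references, and the bookkeeping (in particular the $\kappa$ dependence picked up in re-centering) is handled correctly.
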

%%%----------------------------------------------------------------------------
We refer to \cite[Corollary~2.9]{AW18} or \cite[Lemma~8]{K19} for its proof.

Let $(X,d,\mu)$ be a space of homogeneous type and 
$\cD=\cD^{t_0}\in\bigcup_{t=1}^K\cD^t$ be a dyadic grid. Fix $Q_0\in\cD$.
Then there exists $k_0\in\Z$ and $\alpha_0\in\cA_{k_0}$ such that
$Q_0=Q_{\alpha_0}^{k_0,t_0}$. Consider
%%%
\begin{align}\label{eq:descendants-of-cube}
\cD(Q_0)
&:=
\{Q_\alpha^{k,t_0}\ :\ k\in\Z, \ k\ge k_0,\ \alpha\in\cA_k\}
\\
&=
\{Q'\in\cD^{t_0}\ :\ Q'\subset Q\},
\nonumber
\end{align}
%%%
that is, the set of all dyadic cubes with respect to $Q_0$. The set $\cD(Q_0)$ 
is formed by all dyadic descendants of the cube $Q_0$. For a measurable 
function $f$ such that
%%%
\begin{equation}\label{eq:L1-on-cube}
\int_{Q_0}|f(x)|\,d\mu(x)<\infty,
\end{equation}
%%%
consider the local dyadic maximal function of $f$ defined by
\[
(M^{\cD(Q_0)}f)(x):=\sup_{Q\ni x,\, Q\in\cD(Q_0)}
\frac{1}{\mu(Q)}\int_Q |f(x)|\,d\mu(x),
\quad x\in Q_0.
\]

Given a dyadic grid  $\cD\in\bigcup_{t=1}^K \cD^t$, a sparse family 
$S\subset\cD$ is a collection  of dyadic cubes $Q\in\cD$ for which there 
exists a collection of sets $\{E(Q)\}_{Q\in S}$ such that the sets $E(Q)$ are 
pairwise disjoint, $E(Q)\subset Q$, and 
\[
\mu(Q)\le 2\mu(E(Q)).
\]

We will need the following variation of the Calder\'on-Zygmund decomposition 
of the cube $Q_0$ (cf. \cite[Lemma~2.4]{L17}).
%%%----------------------------------------------------------------------------
\begin{theorem}\label{th:local-CZ}
Let $(X,d,\mu)$ be a space of homogeneous type, $\cD\in\bigcup_{t=1}^K\cD^t$
be a dyadic grid, $Q_0\in\cD$, and $\cD(Q_0)$ be defined by 
\eqref{eq:descendants-of-cube}. Suppose $\eps\in(0,1)$ is the same as in
Lemma~\ref{le:measure-child-father}. For a nonzero measurable function $f$ on $Q_0$
satisfying \eqref{eq:L1-on-cube} and $k\in\N$, consider the sets
%%%
\begin{equation}\label{eq:local-CZ-1}
\Omega_k(Q_0):=\left\{
x\in Q_0\ :\ 
(M^{\cD(Q_0)}f)(x)>\left(\frac{2}{\eps}\right)^k
\frac{1}{\mu(Q_0)}\int_{Q_0}|f(x)|\,d\mu(x)
\right\}.
\end{equation}
%%%
If $\Omega_k(Q_0)\ne\emptyset$, then there exists a collection 
$\{Q_j^k(Q_0)\}_{j\in J_k}\subset\cD(Q_0)$ that is pairwise disjoint, maximal
with respect to inclusion, and such that
%%%
\begin{equation}\label{eq:local-CZ-2}
\Omega_k(Q_0)=\bigcup_{j\in J_k}Q_j^k(Q_0).
\end{equation}
%%%
The collection of cubes
\[
S:=\{Q_j^k(Q_0)\ :\ \Omega_k(Q_0)\ne\emptyset,\ j\in J_k\}
\]
is sparse, and for all $j$ and $k$, the sets
\[
E(Q_j^k(Q_0)):=Q_j^k(Q_0)\setminus\Omega_{k+1}(Q_0)
\]
satisfy
%%%
\begin{equation}\label{eq:local-CZ-3}
\mu\big(Q_j^k(Q_0)\big)\le 2\mu\big(E(Q_j^k(Q_0))\big).
\end{equation}
\end{theorem}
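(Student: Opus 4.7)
The plan is to construct the cubes $Q_j^k(Q_0)$ as maximal dyadic subcubes of $Q_0$ on which $|f|$ has large average, and then use the quantitative doubling estimate of Lemma~\ref{le:measure-child-father} to deduce the sparseness bound \eqref{eq:local-CZ-3}. Set $A:=\frac{1}{\mu(Q_0)}\int_{Q_0}|f|\,d\mu$, and for every $k\in\N$ let $\cF_k$ be the family of cubes $Q\in\cD(Q_0)$ with $\frac{1}{\mu(Q)}\int_Q|f|\,d\mu>(2/\eps)^k A$. Since $2/\eps>1$, the cube $Q_0$ itself is not in $\cF_k$, so every $Q\in\cF_k$ is a proper descendant of $Q_0$ with only finitely many ancestors in $\cD(Q_0)$. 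For $x\in\Omega_k(Q_0)$, the definition of $M^{\cD(Q_0)}f$ yields some $Q\in\cF_k$ containing $x$; the dyadic nesting property of Theorem~\ref{th:Hytonen-Kairema}(a)(ii) guarantees a maximal such $Q$, and two distinct maximal cubes are automatically disjoint. This gives \eqref{eq:local-CZ-2}.

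The main step is the upper bound on the average over a selected cube. Let $Q=Q_j^k(Q_0)$ and let $P\in\cD(Q_0)$ be its parent. Maximality of $Q$ forces $\frac{1}{\mu(P)}\int_P|f|\,d\mu\le(2/\eps)^k A$, and Lemma~\ref{le:measure-child-father} gives $\mu(Q)\ge\eps\mu(P)$, whence
\[
\frac{1}{\mu(Q)}\int_Q|f|\,d\mu
\le \frac{\mu(P)}{\mu(Q)}\cdot\frac{1}{\mu(P)}\int_P|f|\,d\mu
\le \frac{1}{\eps}\Bigl(\frac{2}{\eps}\Bigr)^k A.
\]
To obtain \eqref{eq:local-CZ-3}, note that $\Omega_{k+1}(Q_0)\subseteq\Omega_k(Q_0)$, so $Q_j^k\cap\Omega_{k+1}(Q_0)$ is the disjoint union of those cubes $Q_i^{k+1}(Q_0)$ that are contained in $Q_j^k$. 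Each such cube satisfies $\mu(Q_i^{k+1})<(2/\eps)^{-(k+1)}A^{-1}\int_{Q_i^{k+1}}|f|\,d\mu$, so summing and inserting the previous bound,
\[
\mu\bigl(Q_j^k\cap\Omega_{k+1}(Q_0)\bigr)
\le \frac{1}{(2/\eps)^{k+1}A}\int_{Q_j^k}|f|\,d\mu
\le \frac{\mu(Q_j^k)\,(2/\eps)^k A/\eps}{(2/\eps)^{k+1}A}
= \frac{\mu(Q_j^k)}{2}.
\]
Therefore $\mu(E(Q_j^k(Q_0)))\ge\tfrac12\mu(Q_j^k(Q_0))$, which is \eqref{eq:local-CZ-3}. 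Pairwise disjointness of the sets $E(Q_j^k(Q_0))$ is immediate: within a fixed level it follows from the disjointness of the $Q_j^k$, and across levels $k<k'$ it follows because $E(Q_{j'}^{k'}(Q_0))\subseteq Q_{j'}^{k'}(Q_0)\subseteq\Omega_{k'}(Q_0)\subseteq\Omega_{k+1}(Q_0)$, whereas $E(Q_j^k(Q_0))\subseteq Q_0\setminus\Omega_{k+1}(Q_0)$.

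The only genuine difficulty is the asymmetry introduced by Lemma~\ref{le:measure-child-father}: in $\R^n$ a child has exactly $2^{-n}$ the measure of its parent and the classical Calder\'on--Zygmund threshold is $2^k A$, while here the weaker bound $\mu(Q)\ge\eps\mu(P)$ forces us to use the threshold $(2/\eps)^k A$. The factor $1/\eps$ lost in the upper bound of Step~2 is precisely cancelled by the gap $2/\eps$ between consecutive thresholds in Step~3, which is why the choice $(2/\eps)^k$ in \eqref{eq:local-CZ-1} is tuned to yield the sparseness constant $2$.
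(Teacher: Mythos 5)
Your proposal is correct and follows essentially the same approach as the paper: the Calder\'on--Zygmund selection of maximal dyadic cubes in $\cF_k$, the two-sided bound on the average over each selected cube (lower bound from membership in $\cF_k$, upper bound via the parent and Lemma~\ref{le:measure-child-father}), and the same telescoping estimate to get $\mu(Q_j^k\cap\Omega_{k+1})\le\tfrac12\mu(Q_j^k)$. The only difference is that the paper cites \cite[Theorem~9(a)]{K19} for the existence of the maximal decomposition and the two-sided bound \eqref{eq:local-CZ-4}, whereas you derive those facts self-contained (and you might, for completeness, spell out that $\cF_{k+1}\subseteq\cF_k$ together with maximality of $Q_j^k$ in $\cF_k$ is what rules out any $Q_i^{k+1}$ properly containing $Q_j^k$, as the paper does).
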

%%%----------------------------------------------------------------------------
\begin{proof}
For each $k\in\N$ satisfying $\Omega_k(Q_0)\ne\emptyset$, the existence of a 
pairwise disjoint and maximal with respect to inclusion collection 
$\{Q_j^k(Q_0)\}_{j\in J_k}$, such that
\eqref{eq:local-CZ-2} is fulfilled, follows from \cite[Theorem~9(a)]{K19}.
Moreover, in view of the same theorem, for every $k\in\N$ 
satisfying $\Omega_k(Q_0)\ne\emptyset$ and $j\in J_k$, 
one has
%%%
\begin{align}\label{eq:local-CZ-4}
\left(\frac{2}{\eps}\right)^k\frac{1}{\mu(Q_0)}
\int_{Q_0}|f(x)|\,d\mu(x)
&<
\frac{1}{\mu(Q_j^k(Q_0))}\int_{Q_j^k(Q_0)}|f(x)|\,d\mu(x)
\\
&\le 
\left(\frac{2}{\eps}\right)^k\frac{1}{\eps\mu(Q_0)}
\int_{Q_0}|f(x)|\,d\mu(x).
\nonumber
\end{align}
%%%
It remains to prove \eqref{eq:local-CZ-3}. Since 
$\Omega_{k+1}(Q_0)\subset\Omega_k(Q_0)$ and, for each fixed $k$, the cubes 
$Q_j^k(Q_0)$ are pairwise disjoint, it is clear that the sets
$E(Q_j^k(Q_0))$ are pairwise disjoint for all $j$ and $k$. If
$Q_j^k(Q_0)\cap Q_i^{k+1}(Q_0)\ne \emptyset$, then by the maximality
of the cubes in $\{Q_j^k(Q_0)\}_{j\in J_k}$ and the fact that $2/\eps>1$,
we have $Q_i^{k+1}(Q_0)\subsetneqq Q_j^k(Q_0)$. In view of 
\eqref{eq:local-CZ-2} and \eqref{eq:local-CZ-4}, we see that
%%%
\begin{align*}
\mu\big(Q_j^k(Q_0)\cap\Omega_{k+1}(Q_0)\big)
&=
\sum_{\{i\,:\, Q_i^{k+1}(Q_0)\subsetneqq Q_j^k(Q_0)\}} 
\mu\big(Q_i^{k+1}(Q_0)\big)
\\
&\le 
\sum_{\{i\,:\, Q_i^{k+1}(Q_0)\subsetneqq Q_j^k(Q_0)\}}
\frac{\left(\frac{\eps}{2}\right)^{k+1}\int_{Q_i^{k+1}(Q_0)}|f(x)|\,d\mu(x)}
{\frac{1}{\mu(Q_0)}\int_{Q_0}|f(x)|\,d\mu(x)}
\\
&\le
\frac{\left(\frac{\eps}{2}\right)^{k+1}\int_{Q_j^k(Q_0)}|f(x)|\,d\mu(x)}
{\frac{1}{\mu(Q_0)}\int_{Q_0}|f(x)|\,d\mu(x)} 
\\
&\le 
\left(\frac{\eps}{2}\right)^{k+1}\left(\frac{2}{\eps}\right)^k
\frac{\mu(Q_j^k(Q_0))}{\eps}
=
\frac{\mu(Q_j^k(Q_0))}{2}.
\end{align*}
%%%
Then
%%%
\begin{align*}
\mu\big(E(Q_j^k(Q_0))\big)
&=
\mu\big(Q_j^k(Q_0)\setminus\Omega_{k+1}(Q_0)\big)
\\
&=\mu\big(Q_j^k(Q_0)\big)-\mu\big(Q_j^k (Q_0)\cap\Omega_{k+1}(Q_0)\big)
\\
&\ge 
\left(1-\frac{1}{2}\right)\mu\big(Q_j^k(Q_0)\big),
\end{align*}
%%%
whence $\mu\big(Q_j^k(Q_0)\big)\le 2\mu\big(E(Q_j^k(Q_0))\big)$ for all
$j$ and $k$, which completes the proof of \eqref{eq:local-CZ-3}.
\end{proof}
%%%----------------------------------------------------------------------------
\section{Hardy-Littlewood maximal operator\\ on the associate space of a Banach 
function space}
\label{sec:condition-A-infinity}
\subsection{Banach function spaces}
Let us recall the definition of a Banach function space (see, e.g., 
\cite[Chap.~1, Definition~1.1]{BS88}).
Let $L_+^0(X,d,\mu)$ be the set of all non-negative measurable 
functions on $X$. The characteristic function of a set $E\subset X$ is denoted 
by $\chi_E$. A mapping $\rho: L_+^0(X,d,\mu)\to [0,\infty]$ is called 
a Banach function norm if, for all functions $f,g, f_n$
in the set $L_+^0(X,d,\mu)$ 
with $n\in\N$, for all constants $a\ge 0$, and for all measurable subsets 
$E$ of $X$, the following  properties hold:
%%%
\begin{eqnarray*}
{\rm (A1)} & &
\rho(f)=0  \Leftrightarrow  f=0\ \mbox{a.e.},
\
\rho(af)=a\rho(f),
\
\rho(f+g) \le \rho(f)+\rho(g),\\
{\rm (A2)} & &0\le g \le f \ \mbox{a.e.} \ \Rightarrow \ 
\rho(g) \le \rho(f)
\quad\mbox{(the lattice property)},\\
{\rm (A3)} & &0\le f_n \uparrow f \ \mbox{a.e.} \ \Rightarrow \
       \rho(f_n) \uparrow \rho(f)\quad\mbox{(the Fatou property)},\\
{\rm (A4)} & & \mu(E)<\infty\ \Rightarrow\ \rho(\chi_E) <\infty,\\
{\rm (A5)} & &\int_E f(x)\,d\mu(x) \le C_E\rho(f)
\end{eqnarray*}
%%%%
with a constant $C_E \in (0,\infty)$ that may depend on $E$ and 
$\rho$,  but is independent of $f$. When functions differing only on 
a set of measure  zero are identified, the set $\cE(X,d,\mu)$ of all 
functions $f\in L^0(X,d,\mu)$ for  which  $\rho(|f|)<\infty$ is called a 
Banach function space. For each $f\in \cE(X,d,\mu)$, the norm of $f$ is 
defined by 
\[
\|f\|_\cE :=\rho(|f|). 
\]
The set $\cE(X,d,\mu)$ under the natural 
linear space operations and under  this norm becomes a Banach space (see 
\cite[Chap.~1, Theorems~1.4 and~1.6]{BS88}). 
If $\rho$ is a Banach function norm, its associate norm 
$\rho'$ is defined on $L_+^0(X,d,\mu)$ by
\[
\rho'(g):=\sup\left\{
\int_X f(x)g(x)\,d\mu(x) \ : \ 
f\in L_+^0(X,d,\mu), \ \rho(f) \le 1
\right\}.
\]
It is a Banach function norm itself \cite[Chap.~1, Theorem~2.2]{BS88}.
The Banach function space $\cE'(X,d,\mu)$ determined by the Banach function 
norm $\rho'$ is called the associate space (K\"othe dual) of $\cE(X,d,\mu)$.
%%%----------------------------------------------------------------------------
\subsection{The condition $\cA_\infty$}
Following \cite{L17} and \cite[Definition~1]{K19},
we say that a Banach function space $\cE(X,d,\mu)$ over a space of homogeneous 
type $(X,d,\mu)$ satisfies the condition $\cA_\infty$ if there exist constants 
$\Phi,\theta>0$ such that for every dyadic grid $\cD\in\bigcup_{t=1}^K\cD^t$,
every finite sparse family $S\subset\cD$, every collection of non-negative
numbers $\{\alpha_Q\}_{Q\in S}$, and every collection of pairwise disjoint
measurable sets $\{G_Q\}_{Q\in S}$ such that $G_Q\subset Q$, one has
%%%
\begin{equation}\label{eq:A-infty}
\Bigg\|\sum_{Q\in S}\alpha_Q\chi_{G_Q}\Bigg\|_\cE
\le 
\Phi\left(\max_{Q\in S}\frac{\mu(G_Q)}{\mu(Q)}\right)^\theta
\Bigg\|\sum_{Q\in S}\alpha_Q\chi_Q\Bigg\|_\cE.
\end{equation}

The following result is a generalization of \cite[Theorem~3.1]{L17} from the 
Euclidean setting of $\R^n$ to the setting of spaces of homogeneous type.
%%%----------------------------------------------------------------------------
\begin{theorem}[{\cite[Theorem~2]{K19}}]
\label{th:RAE}
Let $\cE(X,d,\mu)$ be a Banach function space over a space of homogeneous type 
$(X,d,\mu)$ and let $\cE'(X,d,\mu)$ be its associate space. 
\begin{enumerate}
\item[{\rm(a)}]
If the Hardy-Littlewood maximal operator $M$ is bounded on the space
$\cE'(X,d,\mu)$, then the space $\cE(X,d,\mu)$ satisfies the condition 
$\cA_\infty$.

\item[{\rm(b)}]
If the Hardy-Littlewood maximal operator $M$ is bounded on the space 
$\cE(X,d,\mu)$ and the space $\cE(X,d,\mu)$ satisfies the condition 
$\cA_\infty$, then the operator $M$ is bounded on the space $\cE'(X,d,\mu)$.
\end{enumerate}
\end{theorem}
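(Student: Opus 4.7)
My plan is to adapt Lerner's argument~\cite[Theorem~3.1]{L17} to the dyadic framework on $(X,d,\mu)$ developed in Section~\ref{sec:DD-SHT}. For part~(a), I would combine Banach-function-space duality,
\[
\Big\|\sum_{Q\in S}\alpha_Q\chi_{G_Q}\Big\|_\cE
= \sup\Big\{\sum_{Q\in S}\alpha_Q\int_{G_Q} g\,d\mu :\ g\ge 0,\ \|g\|_{\cE'}\le 1\Big\},
\]
with a self-improvement of the hypothesis. The first step is to prove that boundedness of $M$ on $\cE'$ upgrades to boundedness on $\cE'$ of $M_r g := (M|g|^r)^{1/r}$ for some $r = r(\|M\|_{\cE'\to\cE'}) > 1$, the Banach-function-space analogue of the classical Muckenhoupt reverse H\"older self-improvement. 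Once $M_r$ is bounded on $\cE'$, H\"older's inequality together with the pointwise lower bound $M_r g(x)\ge\bigl(\tfrac{1}{\mu(Q)}\int_Q g^r\,d\mu\bigr)^{1/r}$ for $x\in Q$ yields
\[
\int_{G_Q} g\,d\mu
\le \mu(G_Q)^{1-1/r}\Big(\int_Q g^r\,d\mu\Big)^{1/r}
\le 2\Big(\frac{\mu(G_Q)}{\mu(Q)}\Big)^{1-1/r}\int_{E(Q)} M_r g\,d\mu,
\]
where $E(Q)\subset Q$ are the disjoint sparse witnesses with $\mu(Q)\le 2\mu(E(Q))$. Summing over $Q\in S$, exploiting the disjointness of $\{E(Q)\}$ and H\"older's inequality in $\cE$, yields \eqref{eq:A-infty} with $\theta = 1-1/r$.

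For part~(b), by Theorem~\ref{th:HK-pointwise} it suffices to prove that $M^{\cD^t}$ is bounded on $\cE'$ for every $t$. A standard principal-cube stopping-time argument applied to $g\in\cE'$ with respect to $\cD^t$ produces a sparse family $S\subset\cD^t$ such that $M^{\cD^t}g\le C\sum_{P\in S}\alpha_P\chi_P$ pointwise, where $\alpha_P := \tfrac{1}{\mu(P)}\int_P |g|\,d\mu$. Dualizing, the task reduces to estimating
\[
\sum_{P\in S}\alpha_P\int_P f\,d\mu \quad\text{for }f\ge 0,\ \|f\|_\cE\le 1.
\]
I would split $\int_P f$ via the local Calder\'on--Zygmund decomposition of Theorem~\ref{th:local-CZ} applied to $f$ on each $P$, isolating at each level $k$ a thin subset of $P$ whose measure is a small fraction of $\mu(P)$. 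Condition $\cA_\infty$ on $\cE$ applied to these thin subsets with weights $\alpha_P$ produces geometrically summable prefactors, and the boundedness of $M$ on $\cE$ then closes a geometric series, giving the desired bound $C\|g\|_{\cE'}$.

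The main obstacle lies in part~(a): extending the reverse H\"older self-improvement of $M$ from the Euclidean setting to a general Banach function space $\cE'$ over a space of homogeneous type. A proof in this generality would proceed via a Calder\'on--Zygmund stopping-time argument on the adjacent dyadic grids $\cD^t$ (using Theorem~\ref{th:local-CZ}) combined with a John--Str\"omberg-type distribution-function estimate, with the exponent $r$ tracked explicitly in terms of the quasi-metric constant $\kappa$, the doubling constant $C_\mu$, and $\|M\|_{\cE'\to\cE'}$. A secondary difficulty in part~(b) is choosing the thin sets arising from $f$ so that $\cA_\infty$ is applied with a prefactor strictly smaller than one, thereby enabling the bootstrap.
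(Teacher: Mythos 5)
The paper does not prove this statement—Theorem~\ref{th:RAE} is imported as \cite[Theorem~2]{K19}, the author's earlier SHT analogue of Lerner's \cite[Theorem~3.1]{L17}—so there is no in-text argument to compare against, and I assess your plan on its own terms. Your outline for part~(a) is essentially correct, but the ``self-improvement'' step should be named: it is the Rubio de Francia iteration, not a generic Banach-function-space fact. From $\|M\|_{\cE'\to\cE'}<\infty$ one builds, for $g\ge 0$ with $\|g\|_{\cE'}\le 1$, the majorant $Rg=\sum_{k\ge 0}(2\|M\|_{\cE'})^{-k}M^kg$, which satisfies $g\le Rg$, $\|Rg\|_{\cE'}\le 2$, and $[Rg]_{A_1}\le 2\|M\|_{\cE'}$; the reverse H\"older inequality for $A_1$ weights on a space of homogeneous type (which, as you say, is proved by a dyadic stopping-time argument and does hold here) then gives $r>1$ with $M_rg\le C\,Rg$ pointwise, hence $\|M_r\|_{\cE'\to\cE'}<\infty$. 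After that, your H\"older chain through the sparse witnesses $E(Q)$ and a final $\cE$--$\cE'$ duality pairing indeed give \eqref{eq:A-infty} with $\theta=1-1/r$.

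Part~(b), as sketched, has a genuine gap. After dualizing you land on $\sum_{P\in S}\alpha_P\int_P f\,d\mu$ with $\alpha_P=\mu(P)^{-1}\int_P|g|\,d\mu$, and you propose to apply $\cA_\infty$ ``to these thin subsets with weights $\alpha_P$.'' But $\cA_\infty$ controls the $\cE$-norm of $\sum_Q\alpha_Q\chi_{G_Q}$, whereas your $\alpha_P$ are averages of $g\in\cE'$; the quantity $\|\sum_P\alpha_P\chi_{G_P}\|_\cE$ is not controlled by $\|g\|_{\cE'}$, so the hypotheses cannot be brought to bear on this expression. The step you are missing is the transposition identity for the sparse form,
\[
\sum_{P\in S}\alpha_P\int_P f\,d\mu
=\int_X |g|\,\Big(\sum_{P\in S}\frac{1}{\mu(P)}\Big(\int_P f\,d\mu\Big)\chi_P\Big)\,d\mu
\le\|g\|_{\cE'}\,\Big\|\sum_{P\in S}\frac{1}{\mu(P)}\Big(\int_P f\,d\mu\Big)\chi_P\Big\|_\cE,
\]
which moves the problem from $\cE'$ into $\cE$; it then remains to bound the sparse operator $f\mapsto\sum_{P\in S}\big(\mu(P)^{-1}\int_Pf\,d\mu\big)\chi_P$ on $\cE$, and \emph{that} is where $\|M\|_{\cE\to\cE}<\infty$ and condition $\cA_\infty$ are actually consumed. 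Even there the application of $\cA_\infty$ is not immediate: the sets $\Omega_n(P)\setminus\Omega_{n+1}(P)$ arising from the local CZ decomposition of $f$ inside each $P\in S$ are neither pairwise disjoint across $P$ nor themselves dyadic cubes, so one must pass to the maximal CZ cubes at each level, check sparseness of the resulting collection, and only then invoke \eqref{eq:A-infty}. Your sketch leaves these points unresolved.
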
 
%%%----------------------------------------------------------------------------
Since the variable Lebesgue space $L^{p(\cdot)}(X,d,\mu)$
is a Banach function space and, under the condition $1<p_-$, $p_+<\infty$,
its associate space $[L^{p(\cdot)}(X,d,\mu)]'$ is isomorphic to the variable
Lebesgue space $L^{p'(\cdot)}(X,d,\mu)$ (see, e.g., 
\cite[Section~2.10.3]{CF13}), Theorem~\ref{th:RAE}(b) immediately implies 
the following.
%%%----------------------------------------------------------------------------
\begin{corollary}\label{co:RAE}
Let $(X,d,\mu)$ be a space of homogeneous type and $p\in\cP(X)$ be such that
$1<p_-$, $p_+<\infty$. If the Hardy-Littlewood maximal operator $M$ is bounded
on the variable Lebesgue space $L^{p(\cdot)}(X,d,\mu)$ and the space
$L^{p(\cdot)}(X,d,\mu)$ satisfies the condition $\cA_\infty$, then $M$
is bounded on the dual space $L^{p'(\cdot)}(X,d,\mu)$.
\end{corollary}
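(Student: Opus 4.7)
The plan is to view Corollary~\ref{co:RAE} as essentially a translation of Theorem~\ref{th:RAE}(b) into the concrete language of variable Lebesgue spaces, so the work reduces to verifying that the abstract Banach function space hypotheses are met by $L^{p(\cdot)}(X,d,\mu)$ under the assumption $1<p_-,p_+<\infty$.

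First I would recall that the Luxemburg--Nakano norm on $L^{p(\cdot)}(X,d,\mu)$ arises from a Banach function norm in the sense of the (A1)--(A5) axioms of Section~\ref{sec:condition-A-infinity}. Axioms (A1)--(A4) are standard and are recorded in \cite{CF13,DHHR11}, while (A5) — namely local integrability of elements — follows from $p_->1$ together with (A4) via H\"older's inequality for variable exponents applied to $f\chi_E = f\chi_E \cdot 1$ on any set $E$ of finite measure. Therefore $L^{p(\cdot)}(X,d,\mu)$ is a Banach function space in the sense required to apply Theorem~\ref{th:RAE}.

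Next I would invoke the identification of the associate space: under the assumption $1<p_-,p_+<\infty$, one has the isomorphism
\[
[L^{p(\cdot)}(X,d,\mu)]' \cong L^{p'(\cdot)}(X,d,\mu),
\]
with equivalence of norms, where $1/p(x)+1/p'(x)=1$; this is \cite[Section~2.10.3]{CF13} (and is closely related to the duality statement cited in the Introduction via \cite[Proposition~2.79 and Corollary~2.81]{CF13}). Under the hypotheses of the corollary, $M$ is bounded on $\cE(X,d,\mu):=L^{p(\cdot)}(X,d,\mu)$, and this space satisfies the condition $\cA_\infty$. Applying Theorem~\ref{th:RAE}(b) therefore yields the boundedness of $M$ on $\cE'(X,d,\mu)$, and transporting this conclusion through the above isomorphism delivers the boundedness of $M$ on $L^{p'(\cdot)}(X,d,\mu)$.

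There is essentially no obstacle here beyond bookkeeping: all substantive work has already been done in Theorem~\ref{th:RAE}(b). The only mild subtlety is ensuring that the associate-space identification respects the definition of $\cA_\infty$ and the boundedness of $M$ up to equivalence of norms, which is immediate since $\cA_\infty$ and boundedness of a sublinear operator are preserved under passage to an equivalent norm.
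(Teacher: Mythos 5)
Your proposal is correct and follows essentially the same route as the paper: observe that $L^{p(\cdot)}(X,d,\mu)$ is a Banach function space whose associate space is isomorphic to $L^{p'(\cdot)}(X,d,\mu)$ (via \cite[Section~2.10.3]{CF13}), then apply Theorem~\ref{th:RAE}(b). The paper states this in one sentence; you merely fill in some routine verifications (which is fine, though note (A5) for $L^{p(\cdot)}$ holds for arbitrary $p\in\cP(X)$ by variable-exponent H\"older's inequality and does not actually require $p_->1$).
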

%%%----------------------------------------------------------------------------
It follows from Corollary~\ref{co:RAE} that in order to prove 
Theorem~\ref{th:main}, it is sufficient to verify that the space 
$L^{p(\cdot)}(X,d,\mu)$ satisfies the condition $\cA_\infty$. 
%%%----------------------------------------------------------------------------
\section{Norm inequalities and modular inequalities}
\label{sec:modular-condition-A-infinity}
\subsection{Norm-modular unit ball property}
In this subsection we formulate two very useful properties that relate norms
and modulars in variable Lebesgue spaces.
%%%----------------------------------------------------------------------------
\begin{lemma}[{see, e.g., \cite[Lemma~3.2.4]{DHHR11}}]
\label{le:3.2.4}
Let $(X,d,\mu)$ be a space of homogeneous type and $p\in\cP(X)$. Then 
for every $f\in L^{p(\cdot)}(X,d,\mu)$ the inequalities 
$\|f\|_{L^p(\cdot)}\le 1$ and $\varrho_{p(\cdot)}(f)\le 1$ are equivalent.
\end{lemma}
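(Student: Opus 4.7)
The plan is to establish both implications of this classical ``unit ball property'' of the Luxemburg-Nakano norm, using nothing beyond the definitions of $\varrho_{p(\cdot)}$ and $\|\cdot\|_{L^{p(\cdot)}}$, the monotonicity of $t\mapsto t^{p(x)}$ on $[0,\infty)$ for each fixed $x$ with $p(x)\in[1,\infty)$, and the monotone convergence theorem. Nothing from the underlying space of homogeneous type structure enters the argument; the proof is purely measure-theoretic.

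For the easy direction $\varrho_{p(\cdot)}(f)\le 1\Rightarrow\|f\|_{L^{p(\cdot)}}\le 1$, I would simply observe that under the hypothesis, $\lambda=1$ lies in the set $\{\lambda>0:\varrho_{p(\cdot)}(f/\lambda)\le 1\}$, so the infimum defining $\|f\|_{L^{p(\cdot)}}$ is at most $1$.

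For the converse $\|f\|_{L^{p(\cdot)}}\le 1\Rightarrow\varrho_{p(\cdot)}(f)\le 1$, the issue is that the infimum in the Luxemburg norm need not be attained, so we cannot directly plug $\lambda=1$ into $\varrho_{p(\cdot)}(f/\lambda)$. Instead, for each $\lambda>1$, the definition of infimum produces some $\lambda_0\in(\|f\|_{L^{p(\cdot)}},\lambda]$ with $\varrho_{p(\cdot)}(f/\lambda_0)\le 1$. Since $\lambda_0\le\lambda$ and $p(x)\ge 1$ a.e., one has $|f(x)/\lambda|^{p(x)}\le|f(x)/\lambda_0|^{p(x)}$ a.e., whence monotonicity of the integral gives $\varrho_{p(\cdot)}(f/\lambda)\le\varrho_{p(\cdot)}(f/\lambda_0)\le 1$ for every $\lambda>1$. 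Fixing any sequence $\lambda_n\downarrow 1$ with $\lambda_n>1$, the integrands $|f/\lambda_n|^{p(x)}$ increase pointwise a.e.\ to $|f|^{p(x)}$, so the monotone convergence theorem yields $\varrho_{p(\cdot)}(f)=\lim_{n\to\infty}\varrho_{p(\cdot)}(f/\lambda_n)\le 1$.

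The only subtle point, and the one to be careful about, is the passage $\lambda_n\downarrow 1$: one must ensure the pointwise monotone convergence of $|f/\lambda_n|^{p(x)}$ to $|f|^{p(x)}$, which is precisely where the a.e.\ finiteness of $p$ built into the definition of $\cP(X)$ is used, avoiding any discussion of the convention for $|f|^{\infty}$ on a null set.
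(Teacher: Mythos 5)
The paper does not prove this lemma; it is stated as a known fact with a pointer to \cite[Lemma~3.2.4]{DHHR11}. Your argument is correct and is precisely the standard proof of the unit-ball property of the Luxemburg norm: the forward implication by testing $\lambda=1$ in the defining infimum, the reverse by approximating the infimum from above, using monotonicity of $t\mapsto t^{p(x)}$ to pass from $\lambda_0$ to $\lambda$, and then the monotone convergence theorem as $\lambda_n\downarrow 1$.

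One small remark: the monotonicity step $|f(x)/\lambda|^{p(x)}\le|f(x)/\lambda_0|^{p(x)}$ for $\lambda_0\le\lambda$ needs only $p(x)>0$, not $p(x)\ge1$; the lower bound $p_-\ge1$ is irrelevant here and only matters elsewhere in the theory. Your closing observation about a.e.\ finiteness of $p$ is exactly the right thing to flag for the MCT step, and it is also why the pointwise inequality above makes sense without adopting conventions for $t^{\infty}$.
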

%%%----------------------------------------------------------------------------
\begin{lemma}[{see, e.g., \cite[Lemma~3.2.5]{DHHR11}}]
\label{le:3.2.5}
Let $(X,d,\mu)$ be a space of homogeneous type and $p\in\cP(X)$ be such that
$1<p_-$, $p_+<\infty$. Then for every $f\in L^{p(\cdot)}(X,d,\mu)$,
\[
\min\left\{
\varrho_{p(\cdot)}(f)^{\frac{1}{p_-}},
\varrho_{p(\cdot)}(f)^{\frac{1}{p_+}}
\right\}
\le 
\|f\|_{L^{p(\cdot)}}
\le 
\max\left\{
\varrho_{p(\cdot)}(f)^{\frac{1}{p_-}},
\varrho_{p(\cdot)}(f)^{\frac{1}{p_+}}
\right\}.
\]
\end{lemma}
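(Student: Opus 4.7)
The plan is to reduce both inequalities to the norm-modular unit-ball equivalence of Lemma~\ref{le:3.2.4} via a scaling argument, splitting into the cases $\varrho_{p(\cdot)}(f)\le 1$ and $\varrho_{p(\cdot)}(f)>1$. The degenerate case $\varrho_{p(\cdot)}(f)=0$ forces $f=0$ a.e., so both sides of the claim vanish and I may assume $\varrho_{p(\cdot)}(f)\in(0,\infty)$ throughout.

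The engine of the proof is the following scaling dichotomy: since $p_-\le p(x)\le p_+$ pointwise, for every $\lambda>0$,
\[
\varrho_{p(\cdot)}(f/\lambda)\le\lambda^{-p_-}\varrho_{p(\cdot)}(f)\text{ if }\lambda\ge 1,\qquad \varrho_{p(\cdot)}(f/\lambda)\le\lambda^{-p_+}\varrho_{p(\cdot)}(f)\text{ if }0<\lambda\le 1,
\]
while writing $|f(x)|^{p(x)}=\lambda^{p(x)}(|f(x)|/\lambda)^{p(x)}$ and integrating yields the companion bounds
\[
\varrho_{p(\cdot)}(f)\le\lambda^{p_+}\varrho_{p(\cdot)}(f/\lambda)\text{ if }\lambda\ge 1,\qquad \varrho_{p(\cdot)}(f)\le\lambda^{p_-}\varrho_{p(\cdot)}(f/\lambda)\text{ if }0<\lambda\le 1.
\]

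For the upper estimate on $\|f\|_{L^{p(\cdot)}}$, I would choose $\lambda$ so that the first display forces $\varrho_{p(\cdot)}(f/\lambda)\le 1$: take $\lambda=\varrho_{p(\cdot)}(f)^{1/p_-}$ when $\varrho_{p(\cdot)}(f)>1$ (so $\lambda>1$) and $\lambda=\varrho_{p(\cdot)}(f)^{1/p_+}$ when $\varrho_{p(\cdot)}(f)\le 1$ (so $\lambda\le 1$). In each subcase the applicable inequality gives $\varrho_{p(\cdot)}(f/\lambda)\le 1$, and Lemma~\ref{le:3.2.4} then yields $\|f\|_{L^{p(\cdot)}}\le\lambda$; a direct comparison of the exponents $1/p_-$ and $1/p_+$ shows that this $\lambda$ coincides with $\max\{\varrho_{p(\cdot)}(f)^{1/p_-},\varrho_{p(\cdot)}(f)^{1/p_+}\}$ in both cases.

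For the lower estimate, set $\nu:=\|f\|_{L^{p(\cdot)}}>0$. The Fatou property of the modular (or a $\lambda\downarrow\nu$ limit taken in the Luxemburg--Nakano infimum) shows that $\varrho_{p(\cdot)}(f/\nu)\le 1$. Applying the companion bound with $\lambda=\nu$ gives $\varrho_{p(\cdot)}(f)\le\nu^{p_+}$ when $\nu\ge 1$ and $\varrho_{p(\cdot)}(f)\le\nu^{p_-}$ when $\nu\le 1$. Using Lemma~\ref{le:3.2.4} to determine which side of $1$ the modular lies on (so that one knows which of $\varrho_{p(\cdot)}(f)^{1/p_\pm}$ is the smaller), extracting the appropriate root yields $\nu\ge\min\{\varrho_{p(\cdot)}(f)^{1/p_-},\varrho_{p(\cdot)}(f)^{1/p_+}\}$. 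No step is genuinely difficult; the only real obstacle is the four-way bookkeeping ($\lambda$ or $\nu$ on either side of $1$) needed to match each subcase to its extremal exponent and to recognise the resulting bound as the stated $\max$ or $\min$.
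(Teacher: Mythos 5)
Your proof is correct and is essentially the standard argument from \cite[Lemma~3.2.5]{DHHR11}, which the paper cites without reproving: the pointwise scaling estimates $\varrho_{p(\cdot)}(f/\lambda)\le\lambda^{-p_\mp}\varrho_{p(\cdot)}(f)$ together with the unit-ball property of Lemma~\ref{le:3.2.4} are exactly the ingredients used there. One could add the one-line observation that $p_+<\infty$ forces $\varrho_{p(\cdot)}(f)<\infty$ for every $f\in L^{p(\cdot)}$ (via $\varrho_{p(\cdot)}(f)\le\max\{\lambda_0^{p_-},\lambda_0^{p_+}\}\varrho_{p(\cdot)}(f/\lambda_0)$), so that the stated bounds are always finite, but otherwise the bookkeeping over the four sign cases is handled correctly.
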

%%%----------------------------------------------------------------------------
\subsection{Auxiliary lemma}
The following auxiliary lemma illustrates the possibility of substitution
of norm inequalities by modular inequalities.
%%%----------------------------------------------------------------------------
\begin{lemma}\label{le:1}
Let $(X,d,\mu)$ be a space of homogeneous type and $p\in\cP(X)$ satisfy
$1<p_-,p_+<\infty$. Suppose $\cD\in\bigcup_{t=1}^K\cD^t$ is a dyadic
grid. If $S\in\cD$ is a finite family and $\{\alpha_Q\}_{Q\in S}$ is
a family of nonnegative numbers such that
\[
\left\|\sum_{Q\in S}\alpha_Q\chi_Q\right\|_{L^{p(\cdot)}}\le 1,
\]
then
\[
\sum_{Q\in S}\int_Q\alpha_Q^{p(x)}\,\mu(x)\le 1.
\]
\end{lemma}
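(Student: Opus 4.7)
The plan is to translate the norm bound into a modular bound via the unit-ball property, and then exploit the elementary pointwise superadditivity inequality $(\sum_i a_i)^p \ge \sum_i a_i^p$, valid for nonnegative $a_i$ and $p\ge 1$.

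First, I would set $f := \sum_{Q\in S} \alpha_Q\chi_Q$ and apply Lemma~\ref{le:3.2.4}: the hypothesis $\|f\|_{L^{p(\cdot)}}\le 1$ is equivalent to $\varrho_{p(\cdot)}(f)\le 1$, that is,
\[
\int_X \Bigg(\sum_{Q\in S}\alpha_Q\chi_Q(x)\Bigg)^{p(x)} d\mu(x) \le 1.
\]
(Note there is no need to worry about the value $p(x)=\infty$ because the assumption $p_+<\infty$ makes $p$ essentially bounded.)

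Next, I would record the elementary inequality: for $a_1,\dots,a_n\ge 0$ and $p\ge 1$,
\[
\Bigg(\sum_{i=1}^n a_i\Bigg)^p \ge \sum_{i=1}^n a_i^p.
\]
The case $n=2$ follows from observing that $g(t):=(a+t)^p-a^p-t^p$ satisfies $g(0)=0$ and $g'(t)=p(a+t)^{p-1}-pt^{p-1}\ge 0$ on $[0,\infty)$ because $p-1\ge 0$; the general case follows by induction on $n$. Since the family $S$ is finite and $p(x)\ge p_->1$ almost everywhere, this inequality applies pointwise with exponent $p(x)$ to the values $\alpha_Q\chi_Q(x)$. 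Using $\chi_Q(x)^{p(x)}=\chi_Q(x)$, this yields
\[
\Bigg(\sum_{Q\in S}\alpha_Q\chi_Q(x)\Bigg)^{p(x)} \ge \sum_{Q\in S}\alpha_Q^{p(x)}\chi_Q(x)
\]
for a.e. $x\in X$.

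Finally, I would integrate this pointwise bound over $X$, interchange the finite sum with the integral (legal since all terms are nonnegative and the sum is finite), and combine with the modular bound from the first step to obtain
\[
\sum_{Q\in S}\int_Q \alpha_Q^{p(x)}\,d\mu(x) \le \int_X \Bigg(\sum_{Q\in S}\alpha_Q\chi_Q(x)\Bigg)^{p(x)} d\mu(x) \le 1,
\]
which is the desired conclusion. There is no genuine obstacle here: the whole lemma reduces to the superadditivity inequality for $p\ge 1$ applied pointwise, so the only thing to be careful about is to invoke Lemma~\ref{le:3.2.4} to pass from the norm inequality to the modular inequality and back.
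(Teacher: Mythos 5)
Your proof is correct and follows essentially the same route as the paper: pass from the norm bound to the modular bound via Lemma~\ref{le:3.2.4}, apply the pointwise superadditivity inequality $\bigl(\sum_i a_i\bigr)^{p(x)} \ge \sum_i a_i^{p(x)}$ valid for $p(x)\ge 1$, and integrate. The only difference is that you spell out the elementary proof of the superadditivity inequality, which the paper takes for granted.
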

%%%----------------------------------------------------------------------------
\begin{proof}
It is clear that
%%%
\begin{equation}\label{eq:1-1}
\sum_{Q\in S}\int_Q\alpha_Q^{p(x)}\,d\mu(x)
=
\int_X\left(\sum_{Q\in S}\left(\alpha_Q\chi_Q(x)\right)^{p(x)}\right)\,d\mu(x).
\end{equation}
%%%
Since $1<p_-\le p(x)\le p_+<\infty$ for a.e. $x\in X$,  one has
%%%
\begin{equation}\label{eq:1-2}
\sum_{Q\in S}\left(\alpha_Q\chi_Q(x)\right)^{p(x)}
\le 
\left(\sum_{Q\in S}\alpha_Q\chi_Q(x)\right)^{p(x)}.
\end{equation}
%%%
Taking into account \eqref{eq:1-1} and \eqref{eq:1-2}, it follows from
Lemma~\ref{le:3.2.4} that
\[
\sum_{Q\in S}\int_Q\alpha_Q^{p(x)}\,d\mu(x) 
\le 
\int_X\left(\sum_{Q\in S}\alpha_Q\chi_Q(x)\right)^{p(x)}\,d\mu(x)\le 1,
\]
which completes the proof.
\end{proof}
%%%----------------------------------------------------------------------------
\subsection{Modular version of the condition $\cA_\infty$}
In this subsection we formulate a modular analogue of the condition 
$\cA_\infty$ and show that it implies the (norm) condition $\cA_\infty$.
%%%----------------------------------------------------------------------------
\begin{lemma}\label{le:2}
Let $(X,d,\mu)$ be a space of homogeneous type and $p\in\cP(X)$ satisfy
$1<p_-,p_+<\infty$. If there exist constants $\Psi,\xi>1$
such that for every dyadic grid $\cD\in\bigcup_{t=1}^K\cD^t$,
every finite sparse family $S\subset\cD$, every collection of
pairwise disjoint measurable sets $\{G_Q\}_{Q\in S}$ such that
$G_Q\subset Q$ and every collection of nonnegative numbers 
$\{\alpha_Q\}_{Q\in S}$ such that
%%%
\begin{equation}\label{eq:2-1}
\left\|\sum_{Q\in S}\alpha_Q\chi_Q\right\|_{L^{p(\cdot)}}=1,
\end{equation}
%%%
one has
%%%
\begin{equation}\label{eq:2-2}
\sum_{Q\in S}\int_{G_Q}\alpha_Q^{p(x)}\,d\mu(x)
\le 
\Psi\left(\max_{Q\in S}\frac{\mu(G_Q)}{\mu(Q)}\right)^\xi,
\end{equation}
%%%
then the variable Lebesgue space $L^{p(\cdot)}(X,d,\mu)$
satisfies the condition $\cA_\infty$.
\end{lemma}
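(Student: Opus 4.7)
The plan is to reduce the desired norm inequality \eqref{eq:A-infty} to the modular inequality \eqref{eq:2-2} using the unit-ball property (Lemma~\ref{le:3.2.4}) and the norm-modular comparison (Lemma~\ref{le:3.2.5}), exploiting crucially the pairwise disjointness of the sets $\{G_Q\}_{Q\in S}$. First I would fix an arbitrary dyadic grid $\cD$, finite sparse family $S\subset\cD$, pairwise disjoint $\{G_Q\}_{Q\in S}$ with $G_Q\subset Q$, and nonnegative $\{\alpha_Q\}_{Q\in S}$, then, by homogeneity of the norm, normalize so that $\big\|\sum_{Q\in S}\alpha_Q\chi_Q\big\|_{L^{p(\cdot)}}=1$ (the degenerate case where all $\alpha_Q$ vanish being trivial). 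Put $\eta:=\max_{Q\in S}\mu(G_Q)/\mu(Q)\in(0,1]$.

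The decisive observation is that since the sets $G_Q$ are pairwise disjoint, at each point $x\in X$ at most one term in the sum defining $f(x):=\sum_{Q\in S}\alpha_Q\chi_{G_Q}(x)$ is nonzero, and therefore
\[
|f(x)|^{p(x)}=\sum_{Q\in S}\alpha_Q^{p(x)}\chi_{G_Q}(x)
\quad\text{for a.e. }x\in X.
\]
Integrating and invoking the hypothesis \eqref{eq:2-2} yields
\[
\varrho_{p(\cdot)}(f)=\sum_{Q\in S}\int_{G_Q}\alpha_Q^{p(x)}\,d\mu(x)\le \Psi\eta^\xi.
\]

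Now I would split into two cases. If $\Psi\eta^\xi\le 1$, then $\varrho_{p(\cdot)}(f)\le 1$, and since $p_-\le p_+$, Lemma~\ref{le:3.2.5} gives
\[
\|f\|_{L^{p(\cdot)}}\le \varrho_{p(\cdot)}(f)^{1/p_+}\le \Psi^{1/p_+}\eta^{\xi/p_+}.
\]
If instead $\Psi\eta^\xi>1$, one cannot directly use the modular bound, but since $G_Q\subset Q$ and the $\alpha_Q$ are nonnegative, pointwise $f\le \sum_{Q\in S}\alpha_Q\chi_Q$, so by the lattice property of the norm $\|f\|_{L^{p(\cdot)}}\le 1$; on the other hand $\Psi\eta^\xi>1$ is exactly $\Psi^{1/p_+}\eta^{\xi/p_+}>1$, so the bound $\|f\|_{L^{p(\cdot)}}\le \Psi^{1/p_+}\eta^{\xi/p_+}$ again holds. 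Setting $\Phi:=\Psi^{1/p_+}$ and $\theta:=\xi/p_+$ and undoing the normalization yields \eqref{eq:A-infty}.

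The only real obstacle is bridging between modular and norm when the modular exceeds $1$, because Lemma~\ref{le:3.2.5} in that regime produces an exponent $1/p_-$ rather than $1/p_+$, which would ruin the estimate when $\eta$ is not small. The trick for overcoming this is the second case above, where the crude bound $\|f\|_{L^{p(\cdot)}}\le 1$ (obtained directly from $G_Q\subset Q$ and the normalization) is already stronger than what is needed. The remaining steps are routine bookkeeping: verifying the pointwise identity $|f|^{p(\cdot)}=\sum \alpha_Q^{p(\cdot)}\chi_{G_Q}$ from disjointness, and checking that both case estimates combine into a single bound of the required form.
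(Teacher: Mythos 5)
Your proposal is correct and follows essentially the same route as the paper: normalize by the $L^{p(\cdot)}$-norm, use the pairwise disjointness of the $G_Q$ to identify $\varrho_{p(\cdot)}\big(\sum_Q\alpha_Q\chi_{G_Q}\big)$ with $\sum_Q\int_{G_Q}\alpha_Q^{p(x)}\,d\mu(x)$, invoke the modular hypothesis, and return to the norm via Lemma~\ref{le:3.2.5}. The only cosmetic difference is your case split on $\Psi\eta^\xi\lessgtr 1$ together with the crude lattice bound $\|f\|_{L^{p(\cdot)}}\le 1$; the paper handles both regimes at once by observing $\max\{\sigma^{1/p_-},\sigma^{1/p_+}\}\le\Psi^{1/p_-}\eta^{\xi/p_+}$, which gives $\Phi=\Psi^{1/p_-}$ rather than your slightly sharper $\Phi=\Psi^{1/p_+}$, but either constant serves.
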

%%%----------------------------------------------------------------------------
\begin{proof}
Fix a dyadic grid $\cD\in\bigcup_{t=1}^K\cD^t$, a finite sparse family
$S\subset\cD$, a collection of pairwise disjoint measurable sets 
$\{G_Q\}_{Q\in S}$ such that $G_Q\subset Q$. Let $\{\beta_Q\}_{Q\in S}$
be an arbitrary collection of nonnegative numbers. Put
%%%
\begin{equation}\label{eq:2-3}
\alpha_Q:=\frac{\beta_Q}
{\displaystyle\left\|\sum_{Q\in S}\beta_Q\chi_Q\right\|_{L^{p(\cdot)}}}.
\end{equation}
%%%
Then \eqref{eq:2-1} is fulfilled. since the sets $\{G_Q\}_{Q\in S}$ are
pairwise disjoint, we have
\[
\left(\sum_{Q\in S}\alpha_Q\chi_{G_Q}(x)\right)^{p(x)}
=
\sum_{Q\in S}\alpha_Q^{p(x)}\chi_{G_Q}(x),
\quad x\in X.
\]
Hence
%%%
\begin{equation}\label{eq:2-4}
\sigma
:=
\sum_{Q\in S}\int_{G_Q}\alpha_Q^{p(x)}\,d\mu(x)
=
\int_X\left(\sum_{Q\in S}\alpha_Q\chi_{G_Q}(x)\right)^{p(x)}\,d\mu(x).
\end{equation}
%%%
By Lemma~\ref{le:3.2.5} and \eqref{eq:2-2}, \eqref{eq:2-4}, we have
%%%
\begin{align}\label{eq:2-5}
&
\left\|\sum_{Q\in S}\alpha_Q\chi_{G_Q}\right\|_{L^{p(\cdot)}}
\le 
\max\left\{\sigma^{\frac{1}{p_-}},\sigma^{\frac{1}{p_+}}\right\}
\\
&\quad\quad\le
\max\left\{
\Psi^{\frac{1}{p_-}}
\left(\max_{Q\in S}\frac{\mu(G_Q)}{\mu(Q)}\right)^{\frac{\xi}{p_-}},
\Psi^{\frac{1}{p_+}}
\left(\max_{Q\in S}\frac{\mu(G_Q)}{\mu(Q)}\right)^{\frac{\xi}{p_+}}
\right\}
\nonumber\\
&\quad\quad\le
\Psi^{\frac{1}{p_-}}
\left(\max_{Q\in S}\frac{\mu(G_Q)}{\mu(Q)}\right)^{\frac{\xi}{p_+}}
\nonumber
\end{align}
because $\Psi>1$, $\mu(G_Q)\le\mu(Q)$ for all $Q\in S$ and $p_-\le p_+$.
It follows from \eqref{eq:2-3} and \eqref{eq:2-5} that
\[
\left\|\sum_{Q\in S}\beta_Q\chi_{G_Q}\right\|_{L^{p(\cdot)}}
\le 
\Psi^{\frac{1}{p_-}}
\left(\max_{Q\in S}\frac{\mu(G_Q)}{\mu(Q)}\right)^{\frac{\xi}{p_+}}
\left\|\sum_{Q\in S}\beta_Q\chi_Q\right\|_{L^{p(\cdot)}},
\]
that is, the space $L^{p(\cdot)}(X,d,\mu)$ satisfies the condition
$\cA_\infty$ with $\Phi=\Psi^{\frac{1}{p_-}}$ and $\theta=\xi/p_+$.
\end{proof}
%%%----------------------------------------------------------------------------
\section{Preparations for the verification of the condition $\cA_\infty$}
\label{sec:preparations}
\subsection{First lemma}
Let $\|M\|_{\cB(L^{p(\cdot)})}$ denote the norm of the Hardy-Little\-wood 
maximal operator on the variable Lebesgue space $L^{p(\cdot)}(X,d,\mu)$.
As usual, for an exponent $r\in(1,\infty)$, let $r'=r/(r-1)\in(1,\infty)$ 
denote the conjugate exponent.

The preparations of the verification of the condition $\cA_\infty$
in the proof of Theorem~\ref{th:main} consist of four steps.
The first step is the proof of the following extension of \cite[Lemma~5.1]{L17}
with $w\equiv 1$ from the Euclidean setting of $\R^n$  to the setting of 
spaces of homogeneous type.
%%%----------------------------------------------------------------------------
\begin{lemma}\label{le:3}
Let $(X,d,\mu)$ be a space of homogeneous type and $p\in\cP(X)$ satisfy
$1<p_-,p_+<\infty$. Suppose the Hardy-Littlewood maximal operator $M$
is bounded on $L^{p(\cdot)}(X,d,\mu)$. There exist constants $A,\lambda>1$ 
such that for every dyadic grid $\cD\in\bigcup_{t=1}^K\cD^t$,
every family of pairwise disjoint cubes $S_d\subset\cD$, every family
of nonnegative numbers $\{\alpha_Q\}_{Q\in S_d}$, if
%%%
\begin{equation}\label{eq:3-1}
\sum_{Q\in S_d}\int_Q\alpha_Q^{p(x)}\,d\mu(x)\le 1,
\end{equation}
%%%
then
%%%
\begin{equation}\label{eq:3-2}
\sum_{Q\in S_d}\mu(Q)\left(
\frac{1}{\mu(Q)}\int_Q\alpha_Q^{\lambda p(x)}\,d\mu(x)
\right)^{\frac{1}{\lambda}}\le A.
\end{equation}
\end{lemma}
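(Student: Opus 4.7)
The plan is to mimic the proof of \cite[Lemma~5.1]{L17} (taken with $w\equiv 1$) and adapt it from the Euclidean setting to the space of homogeneous type $(X,d,\mu)$.

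First I package the data as a single function. Since the cubes in $S_d$ are pairwise disjoint, the step function $f:=\sum_{Q\in S_d}\alpha_Q\chi_Q$ is well defined and equals $\alpha_Q$ on each $Q\in S_d$. Hypothesis \eqref{eq:3-1} then reads $\varrho_{p(\cdot)}(f)=\int_X f(x)^{p(x)}\,d\mu(x)\le 1$, and Lemma~\ref{le:3.2.4} gives $\|f\|_{L^{p(\cdot)}}\le 1$. Since $\alpha_Q\equiv f$ on $Q$, the target inequality \eqref{eq:3-2} rewrites as
\[
\sum_{Q\in S_d}\mu(Q)^{1-1/\lambda}\left(\int_Q f(x)^{\lambda p(x)}\,d\mu(x)\right)^{1/\lambda}\le A.
\]

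The core of the argument is a \emph{cube-wise modular reverse-H\"older inequality} upgrading the trivial $\lambda=1$ bound (which is just \eqref{eq:3-1}) to some $\lambda>1$. The input is the pointwise control $(Mf)(x)\ge\alpha_Q$ on each $Q\in S_d$ (since $f\equiv\alpha_Q$ on $Q$) together with the norm inequality $\|Mf\|_{L^{p(\cdot)}}\le\|M\|_{\cB(L^{p(\cdot)})}$. In the Euclidean case, Diening's openness theorem for the set of exponents for which $M$ is bounded on $L^{p(\cdot)}(\R^n)$ produces the scalar $\lambda>1$ together with the cube-wise inequality at the modular level; this is what Lerner packages into his Lemma~5.1. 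I would reproduce the self-improvement on $(X,d,\mu)$ by working in the adjacent dyadic grids of Theorem~\ref{th:Hytonen-Kairema}, using $Mf\le C_{HK}(X)\sum_{t=1}^K M^{\cD^t}f$ from Theorem~\ref{th:HK-pointwise} to pass between the global and dyadic maximal operators, and iterating a good-$\lambda$/Calder\'on--Zygmund bootstrap on each cube $Q$ built on the local decomposition of Theorem~\ref{th:local-CZ}.

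With such a cube-wise inequality in hand, summing over $Q\in S_d$ and invoking the pairwise disjointness together with $\varrho_{p(\cdot)}(f)\le 1$ and $\|Mf\|_{L^{p(\cdot)}}\le\|M\|_{\cB(L^{p(\cdot)})}$ (converted back to a modular bound via Lemma~\ref{le:3.2.5}) delivers \eqref{eq:3-2} with a constant $A$ depending only on $p_-$, $p_+$, $\|M\|_{\cB(L^{p(\cdot)})}$, and the doubling/quasi-metric constants of $(X,d,\mu)$. The main obstacle is the middle step: producing $\lambda>1$ together with the modular reverse-H\"older inequality from the operator bound $\|M\|_{\cB(L^{p(\cdot)})}<\infty$ alone, assuming no pointwise regularity of $p$ beyond $1<p_-\le p_+<\infty$. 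In $\R^n$ this is secured by Diening; in $(X,d,\mu)$ one has to extract it via the Hyt\"onen--Kairema dyadic grids and the local Calder\'on--Zygmund decomposition, which is precisely the machinery that Sections~\ref{sec:DD-SHT}--\ref{sec:preparations} develop for this purpose.
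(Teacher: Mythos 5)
Your proposal correctly identifies the destination (a cube-wise modular reverse-H\"older inequality with a uniform exponent $\lambda>1$), the hypothesis ($\varrho_{p(\cdot)}(f)\le 1$ with $f=\sum_{Q\in S_d}\alpha_Q\chi_Q$, hence $\|f\|_{L^{p(\cdot)}}\le 1$), and the general toolbox (Theorems~\ref{th:Hytonen-Kairema}, \ref{th:HK-pointwise}, and the local Calder\'on--Zygmund decomposition of Theorem~\ref{th:local-CZ}). However, the proposal stops precisely at the point where the actual content of the argument begins: you explicitly flag as ``the main obstacle'' the step of producing $\lambda>1$ together with the modular reverse-H\"older estimate from the operator bound alone, and then defer to ``the machinery of Sections~\ref{sec:DD-SHT}--\ref{sec:preparations}'' without saying what that machinery actually does here. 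A description of the form ``iterate a good-$\lambda$/Calder\'on--Zygmund bootstrap'' does not constitute a proof, and in fact the paper does not run a good-$\lambda$ argument.

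What is actually missing is the specific mechanism converting boundedness of $M$ on $L^{p(\cdot)}$ into quantitative geometric decay. Concretely: on each $Q\in S_d$ one forms the level sets $\Omega_k(Q)$ of the local dyadic maximal function $M^{\cD(Q)}\alpha_Q^{p(\cdot)}$ at heights $(2/\eps)^k\,\frac{1}{\mu(Q)}\int_Q\alpha_Q^{p(x)}\,d\mu(x)$, where $\eps$ comes from Lemma~\ref{le:measure-child-father}, and Theorem~\ref{th:local-CZ} yields Calder\'on--Zygmund cubes $Q_j^k(Q)$ with the halving property $\mu(Q_j^k(Q)\setminus\Omega_{k+1}(Q))\ge\frac12\mu(Q_j^k(Q))$. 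The crux --- which your sketch omits --- is the pointwise inequality
\[
\chi_{\Omega_k(Q)}(x)\le 2\big(M^\cD\chi_{\Omega_k(Q)\setminus\Omega_{k+1}(Q)}\big)(x),
\]
obtained directly from the halving property. Summing over $Q$ (using pairwise disjointness of $S_d$, hence of $\{\Omega_k(Q)\}_Q$), applying Theorem~\ref{th:HK-pointwise} and the $L^{p(\cdot)}$-boundedness of $M$, and normalizing, one gets a strict contraction
\[
\Big\|\sum_{Q\in S_d}\alpha_Q\chi_{\Omega_{k+1}(Q)}\Big\|_{L^{p(\cdot)}}
\le\beta
\Big\|\sum_{Q\in S_d}\alpha_Q\chi_{\Omega_k(Q)}\Big\|_{L^{p(\cdot)}}
\]
with $\beta=\big(1-(2C_{HK}(X)\|M\|_{\cB(L^{p(\cdot)})})^{-p_+}\big)^{1/p_+}<1$, which in modular form (Lemma~\ref{le:3.2.5}) gives $\sum_Q\int_{\Omega_k(Q)}\alpha_Q^{p(x)}\,d\mu\le\beta^{p_-(k-1)}$. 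Only then does one pick $\phi>0$ small enough that $(2/\eps)^\phi\beta^{p_-}<1$ and set $\lambda=1+\phi$, so that the layer-cake expansion
\[
\int_Q\alpha_Q^{\lambda p(x)}\,d\mu(x)
=\sum_{k\ge 0}\int_{\Omega_k(Q)\setminus\Omega_{k+1}(Q)}\alpha_Q^{p(x)(1+\phi)}\,d\mu(x)
\]
produces a geometrically convergent series after one more H\"older step in $(\lambda,\lambda')$ over the sum in $Q$. Your proposal names several of the right tools but supplies neither the pointwise trick nor the contraction constant $\beta$ nor the numeric constraint $(2/\eps)^\phi\beta^{p_-}<1$ that determines $\lambda$; as written it is a roadmap, not a proof, and the acknowledged ``main obstacle'' is precisely the part that cannot be waved away.
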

%%%----------------------------------------------------------------------------
\begin{proof}
Let $\eps\in(0,1)$ be the same as in Lemma~\ref{le:measure-child-father}. Fix a 
dyadic grid $\cD\in\bigcup_{t=1}^K\cD^t$ and a family of pairwise disjoint 
cubes $S_d\subset\cD$. For $k\in\N$ and $Q\in S_d$, put
%%%
\begin{equation}\label{eq:3-3}
\Omega_k(Q):=\left\{
x\in Q\ :\ (M^{\cD(Q)}\alpha_Q^{p(\cdot)})(x)>\left(\frac{2}{\eps}\right)^k
\frac{1}{\mu(Q)}\int_Q\alpha_Q^{p(x)}\,d\mu(x)
\right\}.
\end{equation}
%%%
By Theorem~\ref{th:local-CZ}, if these sets are nonempty, then they can be 
written as follows:
%%%
\begin{equation}\label{eq:3-4}
\Omega_k(Q)=\bigcup_j Q_j^k(Q),
\end{equation}
%%%
where
$Q_j^k(Q)\in\cD(Q)$ are pairwise disjoint cubes for all $j$ and $k$, and
%%%
\begin{equation}\label{eq:3-5}
\mu\big(Q_j^k(Q)\setminus\Omega_{k+1}(Q)\big)
\ge
\frac{1}{2}\mu\big(Q_j^k(Q)\big).
\end{equation}

Fix $k\in\N$ and $Q\in S_d$. If $x\in\Omega_k(Q)$, then in view of 
\eqref{eq:3-4} there exists $j_0$ such that $x\in Q_{j_0}^k(Q)$. It follows
from \eqref{eq:3-5} that
%%%
\begin{align*}
\chi_{\Omega_k(Q)}(x)
&\le 
\frac{2\mu\big(Q_{j_0}^k(Q)\setminus\Omega_{k+1}(Q)\big)}
{\mu\big(Q_{j_0}^k(Q)\big)}
\\
&=
\frac{2\mu\big(Q_{j_0}^k(Q)\cap(\Omega_k(Q)\setminus\Omega_{k+1}(Q))\big)}
{\mu\big(Q_{j_0}^k(Q)\big)}
\\
&=
\frac{2}{\mu\big(Q_{j_0}^k(Q)\big)}
\int_{Q_{j_0}^k(Q)}\chi_{\Omega_k(Q)\setminus\Omega_{k+1}(Q)}(y)\,d\mu(y)
\\
&\le 
2\sup_{Q'\ni x,\, Q'\in\cD}\frac{1}{\mu(Q')}
\int_{Q'}\chi_{\Omega_k(Q)\setminus\Omega_{k+1}(Q)}(y)\,d\mu(y),
\end{align*}
%%%
which implies that
\[
\chi_{\Omega_k(Q)}(x)
\le 
2(M^\cD\chi_{\Omega_k(Q)\setminus\Omega_{k+1}(Q)})(x),
\quad
x\in X.
\]
Therefore, for all $k\in\N$ and $Q\in S_d$,
%%%
\begin{align*}
\alpha_Q\chi_{\Omega_k(Q)}(x)
&\le 
2\big(M^\cD(\alpha_Q\chi_{\Omega_k(Q)\setminus\Omega_{k+1}(Q)})\big)(x)
\\
&\le 
2\left(M^\cD\left(
\sum_{Q\in S_d}\alpha_Q\chi_{\Omega_k(Q)\setminus\Omega_{k+1}(Q)}
\right)\right)(x),
\quad
x\in X.
\end{align*}
%%%
Since the cubes in the collection $S_d$ are pairwise disjoint, the sets 
in the collection  $\{\Omega_k(Q)\}_{Q\in S_d}$ are also pairwise disjoint 
for every fixed $k\in\N$. Hence, the above inequality implies that for 
$k\in\N$,
%%%
\begin{equation}\label{eq:3-6}
\sum_{Q\in S_d}\alpha_Q\chi_{\Omega_k(Q)}(x)
\le 
2\left(M^\cD\left(
\sum_{Q\in S_d}\alpha_Q\chi_{\Omega_k(Q)\setminus\Omega_{k+1}(Q)}
\right)\right)(x),
\quad
x\in X.
\end{equation}
%%%
It follows from the boundedness of $M$ on $L^{p(\cdot)}(X,d,\mu)$,
Theorem~\ref{th:HK-pointwise}, and inequality \eqref{eq:3-6} that
%%%
\begin{equation}\label{eq:3-7}
\left\|\sum_{Q\in S_d}\alpha_Q\chi_{\Omega_k(Q)}\right\|_{L^{p(\cdot)}}
\le 
2C_{HK}(X)\|M\|_{\cB(L^{p(\cdot)})}
\left\|
\sum_{Q\in S_d}\alpha_Q\chi_{\Omega_k(Q)\setminus\Omega_{k+1}(Q)}
\right\|_{L^{p(\cdot)}}.
\end{equation}
%%%
Set
%%%
\begin{equation}\label{eq:3-8}
\widetilde{\alpha}_Q:=\alpha_Q\left(
\left\|\sum_{Q\in S_d}\alpha_Q\chi_{\Omega_k(Q)}\right\|_{L^{p(\cdot)}}
\right)^{-1}.
\end{equation}
%%%
Then inequality \eqref{eq:3-7} can be rewritten as follows:
%%%
\begin{equation}\label{eq:3-9}
\frac{1}{2C_{HK}(X)\|M\|_{\cB(L^{p(\cdot)})}}
\le 
\left\|
\sum_{Q\in S_d}\widetilde{\alpha}_Q\chi_{\Omega_k(Q)\setminus\Omega_{k+1}(Q)}
\right\|_{L^{p(\cdot)}}.
\end{equation}
%%%
It follows from \eqref{eq:3-8} that
%%%
\begin{equation}\label{eq:3-10}
\left\|
\sum_{Q\in S_d}\widetilde{\alpha}_Q\chi_{\Omega_k(Q)\setminus\Omega_{k+1}(Q)}
\right\|_{L^{p(\cdot)}}
\le 
\left\|
\sum_{Q\in S_d}\widetilde{\alpha}_Q\chi_{\Omega_k(Q)}
\right\|_{L^{p(\cdot)}}
=1.
\end{equation}
%%%
Inequality \eqref{eq:3-10} and Lemma~\ref{le:3.2.5} imply that
%%%
\begin{align}\label{eq:3-11}
&
\left\|
\sum_{Q\in S_d}\widetilde{\alpha}_Q\chi_{\Omega_k(Q)\setminus\Omega_{k+1}(Q)}
\right\|_{L^{p(\cdot)}}
\\
&\quad\quad\le 
\left(\int_X
\left(
\sum_{Q\in S_d}\widetilde{\alpha}_Q\chi_{\Omega_k(Q)\setminus\Omega_{k+1}(Q)}(x)
\right)^{p(x)}
\,d\mu(x)\right)^{\frac{1}{p_+}}.
\nonumber
\end{align}
%%%
Since the cubes in $S_d$ are pairwise disjoint, we conclude that the sets
in the collection $\{\Omega_k(Q)\setminus\Omega_{k+1}(Q)\}_{Q\in S_d}$
are also pairwise disjoint. Therefore, we deduce from \eqref{eq:3-9} and 
\eqref{eq:3-11} that
%%%
\begin{align}\label{eq:3-12}
&
\left(\frac{1}{2C_{HK}(X)\|M\|_{\cB(L^{p(\cdot)})}}\right)^{p_+}
\\
&\quad\quad
\le 
\int_X\left(
\sum_{Q\in S_d}\widetilde{\alpha}_Q\chi_{\Omega_k(Q)\setminus\Omega_{k+1}(Q)}(x)
\right)^{p(x)}\,d\mu(x)
\nonumber\\
&\quad\quad
=
\sum_{Q\in S_d}
\int_{\Omega_k(Q)\setminus\Omega_{k+1}(Q)}(\widetilde{\alpha}_Q)^{p(x)}\,d\mu(x)
\nonumber\\
&\quad\quad
=
\sum_{Q\in S_d}
\int_{\Omega_k(Q)}(\widetilde{\alpha}_Q)^{p(x)}\,d\mu(x)
-
\sum_{Q\in S_d}
\int_{\Omega_{k+1}(Q)}(\widetilde{\alpha}_Q)^{p(x)}\,d\mu(x).
\nonumber
\end{align}
Again, taking into account that the cubes in $S_d$ are pairwise disjoint, we 
conclude from \eqref{eq:3-10} and Lemma~\ref{le:3.2.4} that
%%%
\begin{align}\label{eq:3-13}
\sum_{Q\in S_d}\int_{\Omega_k(Q)}(\widetilde{\alpha}_Q)^{p(x)}\,d\mu(x)
&=
\int_X\left(
\sum_{Q\in S_d}\widetilde{\alpha}_Q\chi_{\Omega_k(Q)}(x)
\right)^{p(x)}\,d\mu(x)
\\
&\le
\left\|\sum_{Q\in S_d}\widetilde{\alpha}_Q\chi_{\Omega_k(Q)}\right\|_{L^{p(\cdot)}}
=1.
\nonumber
\end{align}
%%%
Since $\Omega_{k+1}(Q)\subset\Omega_k(Q)$, it follows from \eqref{eq:3-10}
that
\[
\left\|\sum_{Q\in S_d}\widetilde{\alpha}_Q\chi_{\Omega_{k+1}(Q)}\right\|_{L^{p(\cdot)}}
\le 
\left\|\sum_{Q\in S_d}\widetilde{\alpha}_Q\chi_{\Omega_k(Q)}\right\|_{L^{p(\cdot)}}
=1.
\]
Hence, in view of Lemma~\ref{le:3.2.5}, we have
%%%
\begin{align}\label{eq:3-14}
\left\|\sum_{Q\in S_d}\widetilde{\alpha}_Q\chi_{\Omega_{k+1}(Q)}\right\|_{L^{p(\cdot)}}
&\le 
\left(\int_X
\left(\sum_{Q\in S_d}\widetilde{\alpha}_Q\chi_{\Omega_{k+1}(Q)}(x)\right)^{p(x)}\,d\mu(x)
\right)^{\frac{1}{p_+}}
\\
&=
\left(
\sum_{Q\in S_d}\int_{\Omega_{k+1}(Q)}
(\widetilde{\alpha}_Q)^{p(x)}\,d\mu(x)
\right)^{\frac{1}{p_+}}.
\nonumber
\end{align}
%%%
It follows from \eqref{eq:3-12}--\eqref{eq:3-14} that
\[
\left\|\sum_{Q\in S_d}\widetilde{\alpha}_Q\chi_{\Omega_{k+1}(Q)}\right\|_{L^{p(\cdot)}}
\le 
\left\{
1-\left(\frac{1}{2C_{HK}(X)\|M\|_{\cB(L^{p(\cdot)})}}\right)^{p_+}
\right\}^{\frac{1}{p_+}}=:\beta.
\]
The above inequality and equality \eqref{eq:3-8} imply that for $k\in\N$,
%%%
\begin{equation}\label{eq:3-15}
\left\|\sum_{Q\in S_d}\alpha_Q\chi_{\Omega_{k+1}(Q)}\right\|_{L^{p(\cdot)}}
\le\beta
\left\|\sum_{Q\in S_d}\alpha_Q\chi_{\Omega_k(Q)}\right\|_{L^{p(\cdot)}}.
\end{equation}
%%%
It follows from \eqref{eq:3-1} and Lemma~\ref{le:3.2.4} that
%%%
\begin{equation}\label{eq:3-16}
\left\|\sum_{Q\in S_d}\alpha_Q\chi_Q\right\|_{L^{p(\cdot)}}\le 1.
\end{equation}
%%%
Since $\Omega_1(Q)\subset Q$, applying \eqref{eq:3-16} and then applying
\eqref{eq:3-15} $k-1$ times, we get
%%%
\begin{align*}
1 
&\ge
\left\|\sum_{Q\in S_d}\alpha_Q\chi_{\Omega_1(Q)}\right\|_{L^{p(\cdot)}}
\\
&\ge
\frac{1}{\beta}
\left\|\sum_{Q\in S_d}\alpha_Q\chi_{\Omega_2(Q)}\right\|_{L^{p(\cdot)}}
\\
&\ge \dots
\\
&\ge \frac{1}{\beta^{k-1}} 
\left\|\sum_{Q\in S_d}\alpha_Q\chi_{\Omega_k(Q)}\right\|_{L^{p(\cdot)}}.
\end{align*}
%%%
Thus
\[
\left\|\sum_{Q\in S_d}\alpha_Q\chi_{\Omega_k(Q)}\right\|_{L^{p(\cdot)}}
\le 
\beta^{k-1},
\quad k\in\N.
\]
In view of Lemma~\ref{le:3.2.5}, this inequality implies that
%%%
\begin{equation}\label{eq:3-17}
\sum_{Q\in S_d}\int_{\Omega_k(Q)}\alpha_Q^{p(x)}\,d\mu(x)
\le 
\beta^{p_-(k-1)},
\quad 
k\in\N.
\end{equation}

Fix $Q\in S_d$. Put $\Omega_0(Q):=Q$. Then it follows from \eqref{eq:3-3}
that for $k\in\Z_+$ and $x\in\Omega_k(Q)\setminus\Omega_{k+1}(Q)$, one has
\[
\alpha_Q^{p(x)}
\le 
(M^{\cD(Q)}\alpha_Q^{p(\cdot)})(x)
\le 
\left(\frac{2}{\eps}\right)^{k+1}
\frac{1}{\mu(Q)}\int_Q\alpha_Q^{p(y)}\,d\mu(y).
\]
Taking into account this inequality, we obtain for every $\phi>0$,
%%%
\begin{align}\label{eq:3-18}
\int_Q &\alpha_Q^{p(x)(1+\phi)}\,d\mu(x)
=
\sum_{k=0}^\infty\int_{\Omega_k(Q)\setminus\Omega_{k+1}(Q)}
\alpha_Q^{p(x)(1+\phi)}\,d\mu(x)
\\
&\quad\le 
\left(\frac{1}{\mu(Q)}\int_Q\alpha_Q^{p(y)}\,d\mu(y)\right)^\phi 
\sum_{k=0}^\infty\left(\frac{2}{\eps}\right)^{\phi(k+1)}
\int_{\Omega_k(Q)\setminus\Omega_{k+1}(Q)}\alpha_Q^{p(x)}\,d\mu(x)
\nonumber\\
&\quad\le 
\left(\frac{1}{\mu(Q)}\int_Q\alpha_Q^{p(y)}\,d\mu(y)\right)^\phi 
\sum_{k=0}^\infty\left(\frac{2}{\eps}\right)^{\phi(k+1)}
\int_{\Omega_k(Q)}\alpha_Q^{p(x)}\,d\mu(x).
\nonumber
\end{align}
%%%
It is easy to see that one can choose $\phi>0$ such that
\[
0<\left(\frac{2}{\eps}\right)^\phi\beta^{p_-}<1.
\]
Then
%%%
\begin{equation}\label{eq:3-19}
\sum_{k=0}^\infty
\left[\left(\frac{2}{\eps}\right)^\phi\beta^{p_-}\right]^{k+1}<\infty.
\end{equation}
%%%
Take $\lambda:=1+\phi$. By \eqref{eq:3-18}, we have
%%%
\begin{align}\label{eq:3-20}
\sum_{Q\in S_d}
& \mu(Q)\left(\frac{1}{\mu(Q)}
\int_Q\alpha_Q^{\lambda p(x)}\,d\mu(x)\right)^{\frac{1}{\lambda}}
\\
&\le 
\sum_{Q\in S_d}\mu(Q)
\left(
\frac{1}{\mu(Q)}
\left(\frac{1}{\mu(Q)}\int_Q\alpha_Q^{p(y)}\,d\mu(y)\right)^\phi
\right)^{\frac{1}{\lambda}}
\nonumber\\
&\quad\quad\quad\times
\left(\sum_{k=0}^\infty\left(\frac{2}{\eps}\right)^{\phi(k+1)}
\int_{\Omega_k(Q)}\alpha_Q^{p(x)}\,d\mu(x)
\right)^{\frac{1}{\lambda}}.
\nonumber
\end{align}
%%%
Since $\frac{1}{\lambda}=\frac{1}{1+\phi}$ and 
$\frac{1}{\lambda'}=\frac{\phi}{1+\phi}$, we have
%%%
\begin{align}\label{eq:3-21}
&
\mu(Q)\left(\frac{1}{\mu(Q)}
\left(\frac{1}{\mu(Q)}\int_Q\alpha_Q^{p(y)}\,d\mu(y)\right)^\phi
\right)^{\frac{1}{\lambda}}
\\
&\quad\quad=
\mu(Q)\left(\frac{1}{(\mu(Q))^{1+\phi}}\right)^{\frac{1}{\lambda}}
\left(
\int_Q\alpha_Q^{p(x)}\,d\mu(x)
\right)^{\frac{\phi}{\lambda}}
\nonumber\\
&\quad\quad=
\left(\int_Q\alpha_Q^{p(x)}\,d\mu(x)\right)^{\frac{1}{\lambda'}}.
\nonumber
\end{align}
%%%
Combining inequality \eqref{eq:3-20} with equality \eqref{eq:3-21}, applying
H\"older's inequality, and taking into account inequality \eqref{eq:3-1},
we obtain
%%%
\begin{align}\label{eq:3-22}
&
\sum_{Q\in S_d}\mu(Q)
\left(\frac{1}{\mu(Q)}\int_Q\alpha_Q^{\lambda p(x)}\,d\mu(x)\right)^{\frac{1}{\lambda}}
\\
&\quad\le
\sum_{Q\in S_d}
\left(\int_Q\alpha_Q^{p(x)}\,d\mu(x)\right)^{\frac{1}{\lambda'}}
\left(\sum_{k=0}^\infty\left(\frac{2}{\eps}\right)^{\phi(k+1)}
\int_{\Omega_k(Q)}\alpha_Q^{p(x)}\,d\mu(x)\right)^{\frac{1}{\lambda}}
\nonumber\\
&\quad\le 
\left(\sum_{Q\in S_d}\int_Q\alpha_Q^{p(x)}\,d\mu(x)\right)^{\frac{1}{\lambda'}}
\left(\sum_{Q\in S_d}\sum_{k=0}^\infty\left(\frac{2}{\eps}\right)^{\phi(k+1)}
\int_{\Omega_k(Q)}\alpha_Q^{p(x)}\,d\mu(x)\right)^{\frac{1}{\lambda}}
\nonumber\\
&\quad=
\left(\sum_{k=0}^\infty\left(\frac{2}{\eps}\right)^{\phi(k+1)}
\sum_{Q\in S_d}\int_{\Omega_k(Q)}\alpha_Q^{p(x)}\,d\mu(x)
\right)^{\frac{1}{\lambda}}.
\nonumber
\end{align}
%%%
It follows from \eqref{eq:3-1}, \eqref{eq:3-17} and \eqref{eq:3-22} that
%%%
\begin{align*}
&
\sum_{Q\in S_d}\mu(Q)\left(
\frac{1}{\mu(Q)}\int_Q\alpha_Q^{\lambda p(x)}\,d\mu(x)
\right)^{\frac{1}{\lambda}}
\\
&\quad\le 
\left\{
\left(\frac{2}{\eps}\right)^\phi\sum_{Q\in S_d}\int_Q\alpha_Q^{p(x)}\,d\mu(x)
+
\sum_{k=1}^\infty\left(\frac{2}{\eps}\right)^{\phi(k+1)}\beta^{p_-(k-1)}
\right\}^{\frac{1}{\lambda}}
\\
&\quad\le 
\left\{
\left(\frac{2}{\eps}\right)^\phi
+
\sum_{k=1}^\infty\left(\frac{2}{\eps}\right)^{\phi(k+1)}\beta^{p_-(k-1)}
\right\}^{\frac{1}{\lambda}}=:A.
\end{align*}
%%%
Combining $2/\eps>1$ and\eqref{eq:3-19}, we see that $A\in(1,\infty)$, 
which completes the proof of \eqref{eq:3-2}.
\end{proof}
%%%----------------------------------------------------------------------------
\subsection{Second lemma}
The next lemma generalizes \cite[Lemma~5.2]{L17} with $w\equiv 1$ from the
Euclidean setting of $\R^n$ to the setting of spaces of homogeneous type.
%%%----------------------------------------------------------------------------
\begin{lemma}\label{le:4}
Let $(X,d,\mu)$ be a space of homogeneous type and $p\in\cP(X)$ satisfy
$1<p_-,p_+<\infty$. Suppose the Hardy-Littlewood maximal operator $M$
is bounded on $L^{p(\cdot)}(X,d,\mu)$. There exist constants $B, \lambda>1$ 
and a measure $\nu$ on $X$ such that for every dyadic grid 
$\cD\in\bigcup_{t=1}^K\cD^t$ and every finite family of pairwise disjoint 
cubes $S_d\subset\cD$, the following properties hold:
\begin{enumerate}
\item[(i)] if $Q\in\cD$ and $t\ge 0$ satisfies
%%%
\begin{equation}\label{eq:4-1}
\int_Qt^{p(x)}\,d\mu(x)\le 1,
\end{equation}
%%%
then
%%%
\begin{equation}\label{eq:4-2}
\mu(Q)\left(
\frac{1}{\mu(Q)}\int_Q t^{\lambda p(x)}\,d\mu(x)
\right)^{\frac{1}{\lambda}}
\le 
B\int_Q t^{p(x)}\,d\mu(x)+\nu(Q);
\end{equation}
%%%
\item[(ii)]
\[
\sum_{Q\in S_d}\nu(Q)\le 2B.
\]
\end{enumerate}
\end{lemma}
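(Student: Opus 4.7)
The approach is to adapt Lerner's argument from \cite[Lemma~5.2]{L17} to the setting of spaces of homogeneous type. Write $L_Q(t):=\mu(Q)\bigl(\tfrac{1}{\mu(Q)}\int_Q t^{\lambda p(x)}\,d\mu\bigr)^{1/\lambda}$ and $I_Q(t):=\int_Q t^{p(x)}\,d\mu$. A first observation is that Lemma~\ref{le:3}, applied to the singleton family $S_d=\{Q\}$ with $\alpha_Q=t$, already gives $L_Q(t)\le A$ whenever $I_Q(t)\le 1$. Hence, taking $B$ to be a suitable multiple of $A$, inequality (i) is automatic with $\nu(Q)=0$ in the regime $I_Q(t)\ge 1$, and the entire content of the lemma lies in controlling $L_Q(t)$ in the small-$I_Q(t)$ regime by a single measure $\nu$ on $X$ whose total mass is uniformly bounded.

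To construct $\nu$, I would proceed grid by grid. For each $s\in\{1,\dots,K\}$ I would produce a measure $\nu_s$ on $X$ such that $L_Q(t)\le B\,I_Q(t)+\nu_s(Q)$ for every $Q\in\cD^s$ and every $t$ with $I_Q(t)\le 1$, and such that $\nu_s(X)\le 2B/K$; then $\nu:=\sum_{s=1}^K \nu_s$ does the job, since for a disjoint family $S_d\subset\cD^{s_0}$ one has $\sum_{Q\in S_d}\nu(Q)\le\nu(X)\le 2B$. The measure $\nu_s$ is built from a Calder\'on--Zygmund level-set analysis of the type used in the proof of Lemma~\ref{le:3}: for each $Q\in\cD^s$ one exploits the layers $\Omega_k(Q)\setminus\Omega_{k+1}(Q)$ produced by Theorem~\ref{th:local-CZ} applied to $t^{p(\cdot)}$, on which the pointwise bound $t^{p(x)}\le(2/\eps)^{k+1}I_Q(t)/\mu(Q)$ and the measure decay $\mu(\Omega_k(Q))\le(\eps/2)^k\mu(Q)$ (both consequences of \eqref{eq:local-CZ-4}) hold. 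Splitting $\int_Q t^{\lambda p(x)}\,d\mu$ along these layers, combining the two bounds, and applying Young's inequality with conjugate exponents $(\lambda,\lambda')$ yields a decomposition of $L_Q(t)$ into a part $\le B I_Q(t)$ and a residual which is subsumed by $\nu_s(Q)$.

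The global bound $\nu_s(X)\le 2B/K$ is then obtained by a second application of Lemma~\ref{le:3}: one selects a maximal disjoint family of stopping cubes in $\cD^s$ and assigns weights $\alpha_Q$ calibrated so that $\sum_Q L_Q(\alpha_Q)$ dominates a fixed multiple of $\nu_s(X)$; the summed estimate $\sum_Q L_Q(\alpha_Q)\le A$ then delivers the required upper bound on $\nu_s(X)$, provided $B$ has been chosen large enough (and $\lambda$ close enough to $1$ to make the geometric series arising in the Young step convergent, exactly as in the choice behind \eqref{eq:3-19}). The principal technical obstacle is the very construction of $\nu_s$: because $t\mapsto I_Q(t)$ is not homogeneous, the naive definition $\nu_s(Q):=\sup_{t:\,I_Q(t)\le 1}[L_Q(t)-B I_Q(t)]_+$ cannot be summed over $S_d$ by any direct rescaling of the $t_Q$'s, and one really needs the Calder\'on--Zygmund geometric scales $(2/\eps)^k$, counterbalanced by the measure decay $(\eps/2)^k$, to distribute the residual $L_Q(t)-B I_Q(t)$ layer by layer in a form that is simultaneously uniform in $t$ and globally summable through Lemma~\ref{le:3}.
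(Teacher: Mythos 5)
Your opening observations are correct and match the paper's starting point: Lemma~\ref{le:3} applied to the singleton $\{Q\}$ gives $L_Q(t)\le A$ whenever $I_Q(t)\le 1$, and the entire difficulty is to extract from this a per-cube ``residual'' $\nu(Q)$ that is both uniform in $t$ and globally summable. The trouble is that the construction you then sketch is not actually carried out, and at the crucial moment you dismiss precisely the mechanism the paper uses.

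Concretely, your proposed definition of $\nu_s(Q)$ by distributing the Calder\'on--Zygmund layers $\Omega_k(Q)\setminus\Omega_{k+1}(Q)$ with Young's inequality is left at the level of a heuristic: you never explain how the residual obtained after subtracting $B\,I_Q(t)$ becomes \emph{independent of $t$}, which is the whole point (the layers $\Omega_k(Q)$ themselves depend on $t$ through $t^{p(\cdot)}$). And the summability argument, ``select a maximal disjoint family of stopping cubes and calibrate weights $\alpha_Q$'', is exactly the part of the proof that requires an idea, yet you give none. So there is a genuine gap: the lemma is reduced to two sub-claims, but neither sub-claim is proved.

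Moreover, your closing assertion that the naive supremum $\sup_{t:\,I_Q(t)\le 1}[L_Q(t)-B\,I_Q(t)]_+$ ``cannot be summed over $S_d$ by any direct rescaling of the $t_Q$'s'' is incorrect, and it is precisely what the paper does. The paper sets
\[
A(Q):=\{t>0: F_1(t)\le 1,\ F_2(t)>B F_1(t)\},\qquad t_Q:=\sup A(Q)\ (\text{or }0),
\]
shows by continuity and Lemma~\ref{le:3} that $F_1(t_Q)<1$ and $F_2(t_Q)=B F_1(t_Q)$, and defines $\nu(Q):=F_2(t_Q)=B F_1(t_Q)$. This dominates your naive supremum (for $t\le t_Q$ use monotonicity of $F_2$; for $t>t_Q$ the residual is $\le 0$), so (i) follows at once. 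For (ii), one takes a maximal subfamily $S_d'\subset S_d$ with $\sum_{Q\in S_d'}F_1(t_Q)\le 2$; rescaling $t_Q\mapsto t_Q/2^{1/p_-}$ brings the $F_1$-sum below $1$, Lemma~\ref{le:3} plus monotonicity of $F_2$ give $\sum_{Q\in S_d'}F_1(t_Q)\le 2^{p_+/p_-}A/B=1/2$, and since any extra cube $P$ satisfies $F_1(t_P)<1$, adding it keeps the sum below $2$, contradicting maximality unless $S_d'=S_d$; whence $\sum_{Q\in S_d}\nu(Q)=B\sum_Q F_1(t_Q)\le 2B$. This is exactly a ``direct rescaling of the $t_Q$'s'' ($t_Q\mapsto t_Q/2^{1/p_-}$), feeding into a single further invocation of Lemma~\ref{le:3}. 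Also note that $\nu$ need not be a genuine $\sigma$-additive measure: it is only ever evaluated on dyadic cubes and summed over finite disjoint families, so your strategy of building honest measures $\nu_s$ with $\nu_s(X)\le 2B/K$ grid by grid is more ambitious than what the statement requires and introduces unnecessary difficulty.
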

%%%----------------------------------------------------------------------------
\begin{proof}
Let $A,\lambda>1$ be the constants from Lemma~\ref{le:3}. Set
%%%
\begin{equation}\label{eq:4-3}
B:=2^{\frac{p_+}{p_-}+1}A.
\end{equation}
%%%
Fix a dyadic grid $\cD\in\bigcup_{t=1}^K\cD^t$. Given a cube $Q\in\cD$, 
consider the functions
\[
F_1(t):=\int_Qt^{p(x)}\,d\mu(x),
\quad
F_2(t):=\mu(Q)\left(
\frac{1}{\mu(Q)}\int_Q t^{\lambda p(x)}\,d\mu(x)
\right)^{\frac{1}{\lambda}},
\quad t\ge 0,
\]
and the set
\[
A(Q):=\{t>0\ : \ F_1(t)\le 1, \ F_2(t)>BF_1(t)\}.
\]
Set
\[
t_Q:=\left\{\begin{array}{lll}
0 &\mbox{if}& A(Q)=\emptyset,
\\
\sup A(Q) &\mbox{if}& A(Q)\ne\emptyset.
\end{array}\right.
\]

We claim that
%%%
\begin{equation}\label{eq:4-4}
F_1(t_Q)<1.
\end{equation}
%%%
Indeed, if $F_1(t_Q)=1$, then taking into account the continuity of $F_1$
and $F_2$, we would have $F_2(t_Q)\ge B>A$, and this would contradict
Lemma~\ref{le:3}.

Further, we also claim that
%%%
\begin{equation}\label{eq:4-5}
F_2(t_Q)=BF_1(t_Q).
\end{equation}
%%%
Indeed, otherwise we would have $F_2(t_Q)>BF_1(t_Q)$. This together with
\eqref{eq:4-4} and the continuity of the functions $F_1$ and $F_2$ would imply
that there exists $\eps>0$ such that
\[
F_1(t_Q+\eps)<1,
\quad
F_2(t_Q+\eps)>BF_1(t_Q+\eps),
\]
and these inequalities would contradict the definition of $t_Q$.

Set
%%%
\begin{equation}\label{eq:4-6}
\nu(Q):=F_2(t_Q)
\end{equation}
%%%
and suppose that \eqref{eq:4-1} is fulfilled. Since the function
$F_2$ is increasing, we see that
%%%
\begin{equation}\label{eq:4-7}
\mu(Q)\left(
\frac{1}{\mu(Q)}\int_Q t^{\lambda p(x)}\,d\mu(x)
\right)^{\frac{1}{\lambda}}
\le 
\nu(Q),
\quad
t\le t_Q.
\end{equation}
%%%
On the other hand, if $t>t_Q$, then $t\notin A(Q)$, whence
$F_2(t)\le BF_1(t)$, that is,
%%%
\begin{equation}\label{eq:4-8}
\mu(Q)\left(
\frac{1}{\mu(Q)}\int_Qt^{\lambda p(x)}\,d\mu(x)
\right)^{\frac{1}{\lambda}}
\le 
B\int_Q t^{p(x)}\,d\mu(x),
\quad t>t_Q.
\end{equation}
%%%
Combining \eqref{eq:4-7} and \eqref{eq:4-8}, we immediately arrive at
inequality \eqref{eq:4-2}, which means that property (i) is fulfilled.

Let us show that property (ii) is fulfilled. Consider an arbitrary
finite family $S_d\subset\cD$ of pairwise disjoint cubes. Among all
subsets $\widetilde{S}_d\subset S_d$ such that 
%%%
\begin{equation}\label{eq:4-9}
\sum_{Q\in\widetilde{S}_d}\int_Qt_Q^{p(x)}\,d\mu(x)\le 2,
\end{equation}
%%%
we choose a maximal subset $S_d'$ that is, a subset containing the 
largest number of cubes (it is not unique, in general).

We claim that $S_d'=S_d$. Indeed, assuming that $S_d'\subsetneqq S_d$ 
and taking into account that the function $F_1$ is increasing, we have
\[
\sum_{Q\in S_d'}\int_Q\left(\frac{t_Q}{2^{1/p_-}}\right)^{p(x)}\,d\mu(x)
\le 
\sum_{Q\in S_d'}\int_Q\frac{t_Q^{p(x)}}{2}\,d\mu(x)\le 1.
\]
By Lemma~\ref{le:3}, this inequality implies that
\[
\sum_{Q\in S_d'}\mu(Q)\left(
\frac{1}{\mu(Q)}\int_Q\left(\frac{t_Q}{2^{1/p_-}}\right)^{\lambda p(x)}\,d\mu(x)
\right)^{\frac{1}{\lambda}}\le A.
\]
Since the function $F_2$ is increasing, the above inequality yields
\[
\sum_{Q\in S_d'}\mu(Q)\left(\frac{1}{\mu(Q)}\int_Q 
\left(
\frac{1}{2^{p_+/p_-}}\right)^\lambda t_Q^{\lambda p(x)}\,d\mu(x)
\right)^{\frac{1}{\lambda}}
\le A, 
\]
whence
\[
\sum_{Q\in S_d'}\mu(Q)\left(
\frac{1}{\mu(Q)}\int_Q t_Q^{\lambda p(x)}\,d\mu(x)
\right)^{\frac{1}{\lambda}}
\le 2^{\frac{p_+}{p_-}}A.
\]
This inequality and equalities \eqref{eq:4-3} and \eqref{eq:4-5} imply that
\[
\sum_{Q\in S_d'}\int_Q t_Q^{p(x)}\,d\mu(x)
=
\frac{1}{B} \sum_{Q\in S_d'}\mu(Q)
\left(
\frac{1}{\mu(Q)}\int_Q t_Q^{\lambda p(x)}\,d\mu(x) 
\right)^{\frac{1}{\lambda}}
\le 
\frac{1}{2}.
\]
Now let $P\in S_d\setminus S_d'$. Then, taking into account \eqref{eq:4-4}, 
we get
\[
\sum_{Q\in S_d'\cup\{P\}}\int_Q t_Q^{p(x)}\,d\mu(x)
\le 
\frac{1}{2}+\int_P t_P^{p(x)}\,d\mu(x)
<\frac{3}{2}.
\]
This inequality in view of \eqref{eq:4-9} contradicts the maximality of $S_d'$.
This proves that $S_d'=S_d$. It follows from \eqref{eq:4-9} and 
\eqref{eq:4-5}--\eqref{eq:4-6} that
\[
\sum_{Q\in S_d}\nu(Q)
=
B\sum_{Q\in S_d}\int_Q t_Q^{p(x)}\,d\mu(x)
\le 2B,
\]
which completes the proof of property (ii) and the lemma.
\end{proof}
%%%----------------------------------------------------------------------------
\subsection{Third lemma}
The next lemma is an extension of \cite[Lemma~5.3]{L17} with $w\equiv 1$ from
the Euclidean setting of $\R^n$ to the setting of spaces of homogeneous type.
%%%----------------------------------------------------------------------------
\begin{lemma}\label{le:5}
Let $(X,d,\mu)$ be a space of homogeneous type and $p\in\cP(X)$ satisfy
$1<p_-,p_+<\infty$. Suppose the Hardy-Littlewood maximal operator $M$
is bounded on $L^{p(\cdot)}(X,d,\mu)$. There exist constants
$D,\gamma>1$ and $\zeta>0$ such that for every dyadic grid 
$\cD\in\bigcup_{t=1}^K\cD^t$ and every cube $Q\in\cD$, if
%%%
\begin{equation}\label{eq:5-1}
\min\left\{1,\frac{1}{\|\chi_Q\|_{L^{p(\cdot)}}^{1+\zeta}}\right\}
\le 
t
\le 
\max\left\{1,\frac{1}{\|\chi_Q\|_{L^{p(\cdot)}}^{1+\zeta}}\right\},
\end{equation}
%%%
then
%%%
\begin{equation}\label{eq:5-2}
\left(\frac{1}{\mu(Q)}\int_Q t^{\gamma p(x)}\,d\mu(x)\right)^{\frac{1}{\gamma}}
\le 
\frac{D}{\mu(Q)}\int_Q t^{p(x)}\,d\mu(x).
\end{equation}
\end{lemma}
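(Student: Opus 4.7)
The plan is to apply Lemma~\ref{le:4} to the singleton family $S_d=\{Q\}$. Setting
\[
F_1(t) := \int_Q t^{p(x)}\,d\mu(x), \qquad F_2(t) := \mu(Q)\left(\frac{1}{\mu(Q)}\int_Q t^{\lambda p(x)}\,d\mu(x)\right)^{1/\lambda},
\]
Lemma~\ref{le:4}(i) yields $F_2(t) \le B F_1(t) + \nu(Q)$ whenever $F_1(t) \le 1$, while Lemma~\ref{le:4}(ii) applied to $\{Q\}$ forces $\nu(Q) \le 2B$. Since \eqref{eq:5-2} (with the choice $\gamma := \lambda$) is equivalent to $F_2(t) \le D F_1(t)$, the main task is to absorb $\nu(Q)$ into a constant multiple of $F_1(t)$ for $t$ in the range \eqref{eq:5-1}.

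By Lemma~\ref{le:3.2.4}, the inequality $F_1(t)\le 1$ is equivalent to $t\le 1/\|\chi_Q\|_{L^{p(\cdot)}}$, and the condition $\|\chi_Q\|_{L^{p(\cdot)}}\ge 1$ is equivalent to $\mu(Q)\ge 1$. I would split the argument accordingly into the case $\|\chi_Q\|_{L^{p(\cdot)}}\ge 1$ (so \eqref{eq:5-1} reads $t\in[1/\|\chi_Q\|_{L^{p(\cdot)}}^{1+\zeta},1]$) and the case $\|\chi_Q\|_{L^{p(\cdot)}}<1$ (so \eqref{eq:5-1} reads $t\in[1,1/\|\chi_Q\|_{L^{p(\cdot)}}^{1+\zeta}]$). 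In the subrange where $F_1(t)\le 1$, Lemma~\ref{le:4} applies directly; in the complementary subrange $F_1(t)>1$, I would rescale by setting $\tilde t := 1/\|\chi_Q\|_{L^{p(\cdot)}}$, which satisfies $F_1(\tilde t)=1$, apply Lemma~\ref{le:4} at $\tilde t$, and transfer the bound back to $t$ via the pointwise comparisons $c^{p_-}\le c^{p(x)}\le c^{p_+}$ (valid when $c:=t/\tilde t\ge 1$) and their analogues for the exponent $\lambda p(x)$.

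The parameters $\gamma,\zeta,D$ must then be tuned so that $F_2(t)/F_1(t)$ is bounded uniformly over all cubes $Q$ and all $t$ satisfying \eqref{eq:5-1}. I would use Lemma~\ref{le:3.2.5} to control $\|\chi_Q\|_{L^{p(\cdot)}}$ in terms of $\mu(Q)^{1/p_\pm}$, and select $\zeta>0$ small enough that the transfer factor $c^{p_+-p_-}$ stays bounded over the relevant range. The main obstacle is the endpoint $t=1/\|\chi_Q\|_{L^{p(\cdot)}}^{1+\zeta}$, where $F_1(t)$ can be small while only the crude bound $\nu(Q)\le 2B$ is available: to overcome this I would exploit the finer expression $\nu(Q)=BF_1(t_Q)$ extracted from the proof of Lemma~\ref{le:4} (with $t_Q<1/\|\chi_Q\|_{L^{p(\cdot)}}$) together with the monotonicity of $F_1$, choosing $\zeta$ small so that $F_1(t)$ and $F_1(t_Q)$ remain comparable up to a constant depending only on $p_-,p_+,\zeta$. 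This would yield \eqref{eq:5-2} with $\gamma=\lambda$ and a uniform constant $D$.
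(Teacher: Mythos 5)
Your plan is a genuinely different route from the paper's, and it has a gap that I do not see how to close with the tools you have assembled.

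The paper does not pass through Lemma~\ref{le:4} at all. Instead, it fixes $\gamma$ strictly between $1$ and the exponent $\lambda$ of Lemma~\ref{le:3}, sets $\zeta:=\frac{\lambda-\gamma}{\gamma(1+\lambda)}>0$, factors out a median value $\alpha=m_p(Q)$ via $t^{\gamma p(x)}=t^{\gamma(p(x)-\alpha)}t^{\gamma\alpha}$, dominates $t^{\gamma(p(x)-\alpha)}$ by $1+\|\chi_Q\|_{L^{p(\cdot)}}^{-\gamma(1+\zeta)(p(x)-\alpha)}$ using a sign case analysis, applies H\"older with exponent $q=\frac{1+\lambda}{1+\gamma}$ to pass from the $\gamma p(x)$-integral to the $\lambda p(x)$-integral, invokes Lemma~\ref{le:3} with $S_d=\{Q\}$ and $\alpha_Q=1/\|\chi_Q\|_{L^{p(\cdot)}}$, and then uses the boundedness of $M$ (through a median-set argument and Lemma~\ref{le:3.2.5}) to get $\|\chi_Q\|_{L^{p(\cdot)}}\lesssim\mu(Q)^{1/m_p(Q)}$. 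The whole point of these parameter choices is the exact identity $1+\zeta-\lambda/(\gamma q)=0$, which makes all powers of $\mu(Q)$ cancel and yields a constant $D$ independent of $Q$. In particular $\gamma<\lambda$ is essential, whereas you fix $\gamma:=\lambda$.

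The gap in your proposal is the uniformity of the transfer/absorption step, and it cannot be repaired just by taking $\zeta$ small, because the obstruction depends on $Q$, not only on $(p_-,p_+,\zeta)$. Concretely: suppose $\|\chi_Q\|_{L^{p(\cdot)}}>1$, so that \eqref{eq:5-1} forces $t\in\bigl[\|\chi_Q\|_{L^{p(\cdot)}}^{-(1+\zeta)},1\bigr]$. For $t\le t_Q$ you write $F_2(t)\le F_2(t_Q)=BF_1(t_Q)$ and want $F_1(t_Q)\lesssim F_1(t)$. Since $t_Q<\|\chi_Q\|_{L^{p(\cdot)}}^{-1}$ is the only control from Lemma~\ref{le:4}, at the bad endpoint $t=\|\chi_Q\|_{L^{p(\cdot)}}^{-(1+\zeta)}$ the ratio $t_Q/t$ can be as large as $\|\chi_Q\|_{L^{p(\cdot)}}^{\zeta}$, hence $F_1(t_Q)/F_1(t)$ can be as large as $\|\chi_Q\|_{L^{p(\cdot)}}^{\zeta p_+}$, which is unbounded as $Q$ ranges over $\cD$. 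The symmetric defect occurs in the regime $\|\chi_Q\|_{L^{p(\cdot)}}<1$: there the rescaling factor $c=t\|\chi_Q\|_{L^{p(\cdot)}}$ ranges up to $\|\chi_Q\|_{L^{p(\cdot)}}^{-\zeta}$, and the ``transfer factor'' $c^{\,p_+-p_-}$ is again unbounded over $Q$. No fixed $\zeta>0$ kills this, and $\zeta=0$ degenerates \eqref{eq:5-1} to the single point $t=\|\chi_Q\|_{L^{p(\cdot)}}^{-1}$. To produce a $Q$-independent constant one needs precisely the extra H\"older room created by the gap $\lambda-\gamma>0$ together with the bound on $\|\chi_Q\|_{L^{p(\cdot)}}$ coming from the boundedness of $M$; your proposal uses neither.
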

%%%----------------------------------------------------------------------------
\begin{proof}
Let $A>0$ and $\lambda>1$ be the constants of Lemma~\ref{le:3}. 
Take any
$\gamma$ satisfying $1<\gamma<\lambda$ and set
\[
\zeta:=\frac{\lambda-\gamma}{\gamma(1+\lambda)}>0.
\]
Fix a dyadic grid $\cD\in\bigcup_{t=1}^K\cD^t$ and a cube $Q\in\cD$.
For any $\alpha>0$, we have
%%%
\begin{equation}\label{eq:5-3}
\left(\frac{1}{\mu(Q)}\int_Q t^{\gamma p(x)}\,d\mu(x)\right)^{\frac{1}{\gamma}}
=
\left(
\frac{1}{\mu(Q)}\int_Q t^{\gamma(p(x)-\alpha)}\,d\mu(x)
\right)^{\frac{1}{\gamma}}t^\alpha.
\end{equation}
%%%
It follows from \eqref{eq:5-1} that either
%%%
\begin{equation}\label{eq:5-4}
1\le t\le\frac{1}{\|\chi_Q\|_{L^{p(\cdot)}}^{1+\zeta}},
\end{equation}
%%%
or
%%%
\begin{equation}\label{eq:5-5}
\frac{1}{\|\chi_Q\|_{L^{p(\cdot)}}^{1+\zeta}}
\le t \le 1.
\end{equation}
%%%
If \eqref{eq:5-4} is fulfilled and $\gamma(p(x)-\alpha)\ge 0$, then
%%%
\begin{equation}\label{eq:5-6}
t^{\gamma(p(x)-\alpha)}
\le 
\left(
\frac{1}{\|\chi_Q\|_{L^{p(\cdot)}}^{1+\zeta}}
\right)^{\gamma(p(x)-\alpha)}
<
1+\left(
\frac{1}{\|\chi_Q\|_{L^{p(\cdot)}}^{1+\zeta}}
\right)^{\gamma(p(x)-\alpha)}.
\end{equation}
%%%
On the other hand, if \eqref{eq:5-4} is fulfilled and $\gamma(p(x)-\alpha)<0$,
then
%%%
\begin{equation}\label{eq:5-7}
t^{\gamma(p(x)-\alpha)}\le 1<1+\left(
\frac{1}{\|\chi_Q\|_{L^{p(\cdot)}}^{1+\zeta}}
\right)^{\gamma(p(x)-\alpha)}.
\end{equation}
%%%
Analogously, if \eqref{eq:5-5} is fulfilled and $\gamma(p(x)-\alpha\ge 0$,
then inequality \eqref{eq:5-7} holds. On the other hand, if \eqref{eq:5-5}
is fulfilled and $\gamma(p(x)-\alpha)<0$, then inequality \eqref{eq:5-6}
holds.

It follows from above that if \eqref{eq:5-1} is fulfilled, then for all 
$x\in X$ and all $\alpha>0$,
\[
t^{\gamma(p(x)-\alpha)}
\le 
1+\|\chi_Q\|_{L^{p(\cdot)}}^{\gamma\alpha(1+\zeta)}
\left(\frac{1}{\|\chi_Q\|_{L^{p(\cdot)}}^{1+\zeta}}\right)^{\gamma p(x)}.
\]
Integrating this inequality over the cube $Q$ yields
\[
\int_Q t^{\gamma(p(x)-\alpha)}\,d\mu(x) 
\le 
\mu(Q)+\|\chi_Q\|_{L^{p(\cdot)}}^{\gamma\alpha(1+\zeta)}
\int_Q 
\left(\frac{1}{\|\chi_Q\|_{L^{p(\cdot)}}^{1+\zeta}}\right)^{\gamma p(x)}
\,d\mu(x).
\]
Hence, taking into account that $(a^\gamma+b^\gamma)^{1/\gamma}\le a+b$ for 
$a,b\ge 0$ and $\gamma>1$, we see that
%%%
\begin{align}\label{eq:5-8}
&
\left(
\frac{1}{\mu(Q)}\int_Q t^{\gamma(p(x)-\alpha)}\,d\mu(x)
\right)^{\frac{1}{\gamma}}
\\
&\quad\le 
\left(1+\left[
\|\chi_Q\|_{L^{p(\cdot)}}^{\alpha(1+\zeta)}
\left(
\frac{1}{\mu(Q)}\int_Q
\left(\frac{1}{\|\chi_Q\|_{L^{p(\cdot)}}^{1+\zeta}}\right)^{\gamma p(x)}
\,d\mu(x)
\right)^{\frac{1}{\gamma}}
\right]^\gamma\right)^{\frac{1}{\gamma}}
\nonumber\\
&\quad\le
1+\|\chi_Q\|_{L^{p(\cdot)}}^{\alpha(1+\zeta)}
\left(
\frac{1}{\mu(Q)}\int_Q
\left(\frac{1}{\|\chi_Q\|_{L^{p(\cdot)}}^{1+\zeta}}\right)^{\gamma p(x)}
\,d\mu(x)
\right)^{\frac{1}{\gamma}}.
\nonumber
\end{align}
%%%
Combining \eqref{eq:5-3} and \eqref{eq:5-8}, we obtain for $\alpha>0$,
%%%
\begin{align}\label{eq:5-9}
&
\left(\frac{1}{\mu(Q)}\int_Q t^{\gamma p(x)}\,d\mu(x)\right)^{\frac{1}{\gamma}}
\\
&\quad\quad\le 
t^\alpha+\|\chi_Q\|_{L^{p(\cdot)}}^{\alpha(1+\zeta)}
\left(\frac{1}{\mu(Q)}
\int_Q\left(\frac{1}{\|\chi_Q\|_{L^{p(\cdot)}}^{1+\zeta}}
\right)^{\gamma p(x)}\,d\mu(x)\right)^{\frac{1}{\gamma}}t^\alpha.
\nonumber
\end{align}
%%%
Let
\[
\alpha=m_p(Q)
\]
be a median value of the function $p$ over the cube $Q$, that is, a number 
satisfying
\[
\max\left\{
\frac{\mu\big(\{x\in Q: p(x)>m_p(Q)\}\big)}{\mu(Q)},
\frac{\mu\big(\{x\in Q: p(x)<m_p(Q)\}\big)}{\mu(Q)}
\right\}\le\frac{1}{2}.
\]
Set
\[
E_1(Q) := \{x\in Q\ :\ p(x)\le m_p(Q)\},
\quad
E_2(Q) := \{x\in Q\ :\ p(x)\ge m_p(Q)\}.
\]
It follows immediately from the definition of $m_p(Q)$ and these sets that
%%%
\begin{equation}\label{eq:5-10}
\mu(E_j(Q))\ge\frac{1}{2}\mu(Q),\quad j=1,2.
\end{equation}
%%%
Then, for $t\in(0,\infty)$, we have
%%%
\begin{align}\label{eq:5-11}
t^\alpha 
&=
t^{m_p(Q)}
\le \frac{2t^{m_p(Q)}\mu\big(E_j(Q)\big)}{\mu(Q)}
\\
&\le 
\left\{\begin{array}{lll}
\displaystyle
\frac{2}{\mu(Q)}\int_{E_1(Q)}t^{p(x)}\,d\mu(x) &\mbox{if}& t\in(0,1),
\\[5mm]
\displaystyle
\frac{2}{\mu(Q)}\int_{E_2(Q)}t^{p(x)}\,d\mu(x) &\mbox{if}& t\in[1,\infty)
\end{array}\right.
\nonumber\\
&\le\frac{2}{\mu(Q)}\int_Q t^{p(x)}\,d\mu(x).
\nonumber
\end{align}
%%%
We claim that
%%%
\begin{equation}\label{eq:5-12}
\chi_Q(x)\le 2(M^\cD\chi_{E_j(Q)})(x),
\quad x\in X,\quad j=1,2.
\end{equation}
%%%
Indeed, if $x\notin Q$, then the statement of \eqref{eq:5-12} is trivial.
On the other hand, if $x\in Q$, then \eqref{eq:5-10} implies that
%%%
\begin{align*}
\chi_Q(x)
&=
\frac{\mu(Q)}{\mu(Q)}\le\frac{2\mu\big(E_j(Q)\big)}{\mu(Q)}
=
\frac{2}{\mu(Q)}\int_Q\chi_{E_j(Q)}(y)\,d\mu(y)
\\
&\le 
2(M^\cD\chi_{E_j(Q)})(x),
\end{align*}
%%%
which completes the proof of \eqref{eq:5-12}.

It follows from \eqref{eq:5-12}, Theorem~\ref{th:HK-pointwise}, and the boundedness
of the Hardy-Littlewood maximal operator on $L^{p(\cdot)}(X,d,\mu)$ that
%%%
\begin{equation}\label{eq:5-13}
\|\chi_Q\|_{L^{p(\cdot)}}
\le 
2C_{HK}(X)\|M\|_{\cB(L^{p(\cdot)})}\|\chi_{E_j(Q)}\|_{L^{p(\cdot)}},
\quad j=1,2.
\end{equation}
%%%
In view of Lemma~\ref{le:3.2.5}, we have
%%%
\begin{align}\label{eq:5-14}
\|\chi_{E_j(Q)}\|_{L^{p(\cdot)}}
&\le 
\max\left\{
\big(\mu(E_j(Q))\big)^{1/p_-(E_j(Q))},
\big(\mu(E_j(Q))\big)^{1/p_+(E_j(Q))}
\right\}
\\
&\le 
\max\left\{
\big(\mu(Q)\big)^{1/p_-(E_j(Q))},
\big(\mu(Q)\big)^{1/p_+(E_j(Q))}
\right\},
\quad j=1,2.
\nonumber
\end{align}
%%%
Taking into account the definition of the sets $E_j(Q)$, we see that
\[
p_-(E_1(Q))\le p_+(E_1(Q))\le m_p(Q)\le p_-(E_2(Q))\le p_+(E_2(Q)).
\]
Therefore, if $\mu(Q)\le 1$, then
%%%
\begin{equation}\label{eq:5-15}
\max\left\{
\big(\mu(Q)\big)^{1/p_-(E_1(Q))},
\big(\mu(Q)\big)^{1/p_+(E_1(Q))}
\right\}
\le
\big(\mu(Q)\big)^{1/m_p(Q)},
\end{equation}
%%%
and if $\mu(Q)>1$, then
%%%
\begin{equation}\label{eq:5-16}
\max\left\{
\big(\mu(Q)\big)^{1/p_-(E_2(Q))},
\big(\mu(Q)\big)^{1/p_+(E_2(Q))}
\right\}
\le
\big(\mu(Q)\big)^{1/m_p(Q)} 
\end{equation}

If \eqref{eq:5-4} is fulfilled, then $\|\chi_Q\|_{L^{p(\cdot)}}\le 1$. Then,
in view of Lemma~\ref{le:3.2.4}, $\mu(Q)\le 1$. On the other hand if
\eqref{eq:5-5} is fulfilled, then $\|\chi_Q\|_{L^{p(\cdot)}}\ge 1$. Therefore,
by Lemma~\ref{le:3.2.4}, $\mu(Q)\ge 1$. Thus, if \eqref{eq:5-1} is 
fulfilled, then \eqref{eq:5-13}--\eqref{eq:5-16} imply that
%%%
\begin{equation}\label{eq:5-17}
\|\chi_Q\|_{L^{p(\cdot)}}
\le 
2C_{HK}(X)\|M\|_{\cB(L^{p(\cdot)}}\big(\mu(Q)\big)^{1/m_p(Q)}.
\end{equation}
%%%
Set
\[
q:=\frac{1+\lambda}{1+\gamma},
\quad
q':=\frac{q}{q-1}.
\]
Then
%%%
\begin{equation}\label{eq:5-18}
\gamma q(1+\zeta)
=
\gamma\frac{1+\lambda}{1+\gamma}
\left(1+\frac{\lambda-\gamma}{\gamma(1+\lambda)}\right)
=
\lambda
\end{equation}
%%%
and
%%%
\begin{equation}\label{eq:5-19}
\gamma q'\zeta
=
\gamma
\frac{1+\lambda}{1+\gamma}
\left(\frac{1+\lambda}{1+\gamma}-1\right)^{-1}
\frac{\lambda-\gamma}{\gamma(1+\lambda)}=1.
\end{equation}
%%%
Taking into account \eqref{eq:5-18}--\eqref{eq:5-19}, by H\"older's inequality
with the exponents $q,q'\in(1,\infty)$, we get
%%%
\begin{align}\label{eq:5-20}
&
\frac{1}{\mu(Q)}\int_Q\left(
\frac{1}{\|\chi_Q\|_{L^{p(\cdot)}}^{1+\zeta}}
\right)^{\gamma p(x)}\,d\mu(x)
\\
&\quad\quad\le 
\left(\frac{1}{\mu(Q)}\int_Q\left(
\frac{1}{\|\chi_Q\|_{L^{p(\cdot)}}}
\right)^{\gamma q(1+\zeta) p(x)}\,d\mu(x)\right)^{\frac{1}{q}}
\nonumber\\
&\quad\quad=
\left(\frac{1}{\mu(Q)}\int_Q\left(
\frac{1}{\|\chi_Q\|_{L^{p(\cdot)}}}\right)^{\lambda p(x)}\,d\mu(x)
\right)^{\frac{1}{q}}.
\nonumber
\end{align}
%%%
Since
\[
\int_Q\left(\frac{1}{\|\chi_Q\|_{L^{p(\cdot)}}}\right)^{p(x)}\,d\mu(x)
=
\int_X\left(\frac{\chi_Q(x)}{\|\chi_Q\|_{L^{p(\cdot)}}}\right)^{p(x)}\,d\mu(x)
\le 1,
\]
applying Lemma~\ref{le:3} with $S_d=\{Q\}$ and 
$\alpha_Q=1/\|\chi_Q\|_{L^{p(\cdot)}}$, we obtain
\[
\mu(Q)\left(
\frac{1}{\mu(Q)}\int_Q
\left(\frac{1}{\|\chi_Q\|_{L^{p(\cdot)}}}\right)^{\lambda p(x)}\,d\mu(x)
\right)^{\frac{1}{\lambda}} 
\le A.
\]
Hence
%%%
\begin{equation}\label{eq:5-21}
\left(\frac{1}{\mu(Q)}\int_Q
\left(\frac{1}{\|\chi_Q\|_{L^{p(\cdot)}}}\right)^{\lambda p(x)}\,d\mu(x)
\right)^{\frac{1}{q}}
\le 
\left(\frac{A}{\mu(Q)}\right)^{\frac{\lambda}{q}}.
\end{equation}
%%%
Combining \eqref{eq:5-20}--\eqref{eq:5-21}, we arrive at the following 
inequality:
\[
\frac{1}{\mu(Q)}\int_Q\left(
\frac{1}{\|\chi_Q\|_{L^{p(\cdot)}}^{1+\zeta}}
\right)^{\gamma p(x)}\, d\mu(x) 
\le 
\left(\frac{A}{\mu(Q)}\right)^{\frac{\lambda}{q}}.
\]
It follows from the previous estimate and estimate \eqref{eq:5-17} that
%%%
\begin{align}\label{eq:5-22}
&
\|\chi_Q\|_{L^{p(\cdot)}}^{\alpha(1+\zeta)} 
\left(\frac{1}{\mu(Q)}\int_Q\left( 
\frac{1}{\|\chi_Q\|_{L^{p(\cdot)}}^{1+\zeta}}
\right)^{\gamma p(x)}\,d\mu(x)\right)^{\frac{1}{\gamma}}t^\alpha
\\
&\quad\quad 
\le 
\left(2C_{HK}(X)\|M\|_{\cB(L^{p(\cdot)})}\right)^{\alpha(1+\zeta)}
(\mu(Q))^{\frac{\alpha(1+\zeta)}{m_p(Q)}}
\left(\frac{A}{\mu(Q)}\right)^{\frac{\lambda}{\gamma q}}t^\alpha
\nonumber\\
&\quad\quad 
=
\left(2C_{HK}(X)\|M\|_{\cB(L^{p(\cdot)})}\right)^{m_p(Q)(1+\zeta)} 
A^{\frac{\lambda}{\gamma q}}(\mu(Q))^{1+\zeta-\frac{\lambda}{\gamma q}}t^\alpha.
\nonumber
\end{align}
%%%
Taking into account the definitions of $\zeta$ and $q$, we see that
%%%
\begin{equation}\label{eq:5-23}
1+\zeta-\frac{\lambda}{\gamma q}
=
1+\frac{\lambda-\gamma}{\gamma(1+\lambda)}
-
\frac{1+\gamma}{1+\lambda}\cdot\frac{\lambda}{\gamma}=0.
\end{equation}
%%%
Combining \eqref{eq:5-9}, \eqref{eq:5-11} and \eqref{eq:5-22}--\eqref{eq:5-23},
we get
\[
\left(\frac{1}{\mu(Q)}\int_Qt^{\gamma p(x)}\,d\mu(x)\right)^{\frac{1}{\gamma}}
\le 
\frac{D}{2}t^{m_p(Q)} 
\le 
\frac{D}{\mu(Q)}\int_Q t^{p(x)}\,d\mu(x)
\]
with
\[
D:=1+\left(2C_{HK}(X)\|M\|_{\cB(L^{p(\cdot)})}\right)^{p_+(1+\zeta)}
A^{\frac{\lambda}{\gamma q}},
\]
which completes the proof of \eqref{eq:5-2}.
\end{proof}
%%%----------------------------------------------------------------------------
\subsection{Fourth lemma}
The next lemma is an extension of \cite[Lemma~4.1]{L17} with $w\equiv 1$ from
the Euclidean setting of $\R^n$ to the setting of spaces of homogeneous type.
%%%----------------------------------------------------------------------------
\begin{lemma}\label{le:6}
Let $(X,d,\mu)$ be a space of homogeneous type and $p\in\cP(X)$ satisfy
$1<p_-,p_+<\infty$. If the Hardy-Littlewood maximal operator $M$ is bounded
on $L^{p(\cdot)}(X,d,\mu)$, then there exist constants $C,\gamma>1$ and
$\eta>0$ and a measure $\nu$ on $X$ such that for every dyadic grid
$\cD\in\bigcup_{t=1}^K\cD^t$ and every finite family of pairwise disjoint 
cubes $S_d\subset\cD$ the following properties hold:
%%%
\begin{enumerate}
\item[(i)]
if $Q\in\cD$ and $t>0$ satisfies $t\|\chi_Q\|_{L^{p(\cdot)}}\le 1$, then
%%%
\begin{equation}\label{eq:6-1}
\mu(Q)\left(
\frac{1}{\mu(Q)}\int_Q t^{\gamma p(x)}\,d\mu(x)
\right)^{\frac{1}{\gamma}}
\le 
C\int_Q t^{p(x)}\,d\mu(x)+2t^\eta\nu(Q)\chi_{(0,1)}(t);
\end{equation}

\item[(ii)]
\[
\sum_{Q\in S_d}\nu(Q)\le C.
\]
\end{enumerate}
\end{lemma}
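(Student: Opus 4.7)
The approach is to combine Lemma~\ref{le:4} (which gives a $\lambda$-bound plus an additive constant $\nu_4(Q)$, valid throughout $t\|\chi_Q\|_{L^{p(\cdot)}}\le 1$) with Lemma~\ref{le:5} (which gives a sharper $\gamma$-bound without the additive constant, but only in the sandwich range $\min\{1,1/\|\chi_Q\|_{L^{p(\cdot)}}^{1+\zeta}\}\le t\le \max\{1,1/\|\chi_Q\|_{L^{p(\cdot)}}^{1+\zeta}\}$). The proof of Lemma~\ref{le:5} allows $\gamma$ to be any number in $(1,\lambda)$; fix one. Abbreviate $F_1(t):=\int_Q t^{p(x)}\,d\mu(x)$ and $F_2^{(r)}(t):=\mu(Q)\bigl(\tfrac{1}{\mu(Q)}\int_Q t^{rp(x)}\,d\mu(x)\bigr)^{1/r}$ for $r>1$. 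By Lemma~\ref{le:3.2.4}, the hypothesis $t\|\chi_Q\|_{L^{p(\cdot)}}\le 1$ is equivalent to $F_1(t)\le 1$, so Lemma~\ref{le:4} is always applicable and together with the power-mean inequality ($\gamma<\lambda$) gives the uniform estimate $F_2^{(\gamma)}(t)\le F_2^{(\lambda)}(t)\le B\,F_1(t)+\nu_4(Q)$.

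The case split is on the position of $t$ relative to Lemma~\ref{le:5}'s range. When $t\ge 1$, the hypothesis forces $\|\chi_Q\|_{L^{p(\cdot)}}\le 1$ and $t\le 1/\|\chi_Q\|_{L^{p(\cdot)}}^{1+\zeta}$, placing $t$ inside Lemma~\ref{le:5}'s range; hence $F_2^{(\gamma)}(t)\le D\,F_1(t)$, and with $\chi_{(0,1)}(t)=0$ this is exactly \eqref{eq:6-1}. The same conclusion holds when $0<t<1$ with $t\ge 1/\|\chi_Q\|_{L^{p(\cdot)}}^{1+\zeta}$ (which forces $\mu(Q)>1$); inequality \eqref{eq:6-1} then follows by appending the non-negative correction term.

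The remaining regime is $0<t<t_\star:=\min\{1,1/\|\chi_Q\|_{L^{p(\cdot)}}^{1+\zeta}\}$. I apply Lemma~\ref{le:5} at the boundary $t_\star$ (which does lie in its range) to obtain $F_2^{(\gamma)}(t_\star)\le D\,F_1(t_\star)$, and extrapolate down to $t$ via the pointwise inequality $t^{\gamma p(x)}\le (t/t_\star)^{\gamma p_-}t_\star^{\gamma p(x)}$, valid because $0<t/t_\star\le 1$ and $p(x)\ge p_-$. Integration yields $F_2^{(\gamma)}(t)\le (t/t_\star)^{p_-}F_2^{(\gamma)}(t_\star)\le D(t/t_\star)^{p_-}F_1(t_\star)$, which with $\eta:=p_-$ becomes $t^\eta\cdot D\,F_1(t_\star)/t_\star^{p_-}$, of the desired form $2t^\eta\nu(Q)$ provided $\nu(Q)\ge D\,F_1(t_\star)/(2t_\star^{p_-})$.

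The main obstacle is the summability property (ii), namely $\sum_{Q\in S_d}\nu(Q)\le C$ uniformly in $S_d$. The $\nu_4(Q)$ contribution is summable by Lemma~\ref{le:4}(ii), but the extrapolation contribution $F_1(t_\star)/t_\star^{p_-}$ is only trivially bounded by $\mu(Q)$, and $\sum_{Q\in S_d}\mu(Q)$ is uncontrolled when $\mu(X)=\infty$. I expect to circumvent this by mirroring the maximality/selection argument from the proof of Lemma~\ref{le:4}(ii): select a maximal sub-family of $S_d$ so that a weighted modular-sum built from $t_\star$ is uniformly bounded, and then invoke Lemma~\ref{le:3} with a carefully chosen $\alpha_Q$ (built from $t_\star$) to absorb the denominator $t_\star^{p_-}$ into a summable Lemma~\ref{le:3}-quantity. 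Bundling the two contributions into a single finite measure $\nu$ on $X$ meeting both \eqref{eq:6-1} and (ii) is the technical heart of the proof.
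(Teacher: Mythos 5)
There is a genuine gap, and you have put your finger on it yourself: in the regime $0<t<t_\star$ your extrapolation produces a requirement $\nu(Q)\gtrsim D\,F_1(t_\star)/t_\star^{p_-}$, and when $\|\chi_Q\|_{L^{p(\cdot)}}\le 1$ this is $\gtrsim D\,\mu(Q)$, which is not summable over pairwise disjoint families when $\mu(X)=\infty$. The selection/maximality argument you sketch as a hoped-for fix is not carried out, and it is not at all clear it could work: the quantity $F_1(t_\star)/t_\star^{p_-}$ has no analogue of the saturation identity $F_2(t_Q)=BF_1(t_Q)$ that makes the selection argument in the proof of Lemma~\ref{le:4}(ii) go through. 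So the proposal does not close.

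The paper avoids constructing a new measure entirely. It simply takes $\nu$ to be the measure from Lemma~\ref{le:4}, so property (ii) is immediate from Lemma~\ref{le:4}(ii) once $C\ge 2B$. The case split for (i) is not on the position of $t$ relative to $t_\star$ but on whether \eqref{eq:6-3} holds; if it fails (the case \eqref{eq:6-4}), then in particular \eqref{eq:5-2} fails, so the contrapositive of Lemma~\ref{le:5} forces $t<\|\chi_Q\|_{L^{p(\cdot)}}^{-(1+\zeta)}$, whence $\int_Q t^{p(x)/(1+\zeta)}\,d\mu\le 1$. One then applies Lemma~\ref{le:4} with $t^{1/(1+\zeta)}$ in place of $t$, and the crucial trick is a bootstrap: H\"older's inequality together with \eqref{eq:6-4} shows $B\int_Q t^{p(x)/(1+\zeta)}\,d\mu \le B C^{-1/(1+\zeta)}\bigl(\tfrac{1}{\mu(Q)}\int_Q t^{\lambda p(x)/(1+\zeta)}\,d\mu\bigr)^{1/\lambda}$, and since $BC^{-1/(1+\zeta)}\le 1/2$ by the choice of $C$, this term can be absorbed into the left-hand side of Lemma~\ref{le:4}'s inequality \eqref{eq:6-6}, leaving $\bigl(\tfrac{1}{\mu(Q)}\int_Q t^{\lambda p(x)/(1+\zeta)}\,d\mu\bigr)^{1/\lambda}\le 2\nu(Q)$. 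The factor $t^{\eta}$ with $\eta=\zeta p_-/(1+\zeta)$ then comes out of the elementary estimate $t^{p(x)/(1+\zeta)}\le t^{p(x)}t^{-\zeta p_-/(1+\zeta)}$ for $0<t<1$, and one finishes by the power-mean inequality $\gamma<\lambda$. Your proposal is missing both the substitution $t\mapsto t^{1/(1+\zeta)}$ (which is what makes Lemma~\ref{le:4} applicable) and the absorption step (which is what kills the $BF_1$ term without introducing a new, unsummable measure).
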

%%%----------------------------------------------------------------------------
\begin{proof}
Let $B,D>1$, and $1<\gamma<\lambda$ be the constants given by
Lemmas~\ref{le:4} and~\ref{le:5} and $\nu$ be the measure on $X$ given by
Lemma~\ref{le:4}. Put
\[
\zeta:=\frac{\lambda-\gamma}{\gamma(1+\lambda)}
\]
and take
%%%
\begin{equation}\label{eq:6-2}
C:=\max\{(2B)^{1+\zeta},D\}.
\end{equation}
%%%
Then part (ii) follows from part (ii) of Lemma~\ref{le:4} because 
$C\ge 2B$.

Let us prove part (i). If $t\|\chi_Q\|_{L^{p(\cdot)}}\le 1$ and $t\ge 1$,
then $\|\chi_Q\|_{L^{p(\cdot)}}\le 1$ and, therefore, 
$\|\chi_Q\|_{L^{p(\cdot)}}^{1+\zeta}\le\|\chi_Q\|_{L^{p(\cdot)}}\le 1$.
It is easy to check that \eqref{eq:5-1} is fulfilled. Then it follows
from Lemma~\ref{le:5} and \eqref{eq:6-2} that
\[
\left(\frac{1}{\mu(Q)}\int_Q t^{\gamma p(x)}\,d\mu(x)\right)^{\frac{1}{\gamma}}
\le 
\frac{C}{\mu(Q)}\int_Q t^{p(x)}\,d\mu(x),
\]
which immediately implies \eqref{eq:6-1} and completes the proof of part (i)
for $t\ge 1$.

Assume that $t\|\chi_Q\|_{L^{p(\cdot)}}\le 1$ and $0<t<1$. If
%%%
\begin{equation}\label{eq:6-3}
\left(\frac{1}{\mu(Q)}\int_Q t^{\gamma p(x)}\,d\mu(x)\right)^{\frac{1}{\gamma}}
\le 
\frac{C}{\mu(Q)}\int_Q t^{p(x)}\,d\mu(x),
\end{equation}
%%%
then \eqref{eq:6-1} is trivial.

Assume that \eqref{eq:6-3} does not hold, that is,
%%%
\begin{equation}\label{eq:6-4}
\frac{1}{\mu(Q)}\int_Q t^{p(x)}\,d\mu(x)
< 
\frac{1}{C}\left(
\frac{1}{\mu(Q)}\int_Q t^{\gamma p(x)}\,d\mu(x)
\right)^{\frac{1}{\gamma}}.
\end{equation}
%%%
Take, as in the proof of Lemma~\ref{le:5},
\[
q:=\frac{1+\lambda}{1+\gamma},
\quad
q':=\frac{q}{q-1}.
\]
By H\"older's inequality, \eqref{eq:6-4} and \eqref{eq:5-18},
%%%
\begin{align}\label{eq:6-5}
\frac{1}{\mu(Q)}\int_Q t^{\frac{p(x)}{1+\zeta}}\,d\mu(x)
&\le 
\left(\frac{1}{\mu(Q)}\int_Q t^{p(x)}\,d\mu(x)\right)^{\frac{1}{1+\zeta}}
\\
&\le 
\left(\frac{1}{C}\right)^{\frac{1}{1+\zeta}}
\left(\frac{1}{\mu(Q)}\int_Q t^{\gamma q p(x)}\,d\mu(x)
\right)^{\frac{1}{\gamma q(1+\zeta)}}
\nonumber\\
&=
\left(\frac{1}{C}\right)^{\frac{1}{1+\zeta}}
\left(\frac{1}{\mu(Q)}\int_Q t^{\frac{\lambda p(x)}{1+\zeta}}\,d\mu(x)
\right)^{\frac{1}{\lambda}}.
\nonumber
\end{align}

It follows from \eqref{eq:6-2} and \eqref{eq:6-4} that
\[
\frac{D}{\mu(Q)}\int_Q t^{p(x)}\,d\mu(x)
\le 
\frac{C}{\mu(Q)}\int_Q t^{p(x)}\,d\mu(x) 
<
\left(
\frac{1}{\mu(Q)}\int_Q t^{\gamma p(x)}\,d\mu(x)
\right)^{\frac{1}{\gamma}},
\]
that is, \eqref{eq:5-2} does not hold. Therefore, by Lemma~\ref{le:5},
condition \eqref{eq:5-1} is not fulfilled. Since $0<t<1$, this means that
\[
0<t<\frac{1}{\|\chi_Q\|_{L^{p(\cdot)}}^{1+\zeta}},
\]
whence $\left\|t^{\frac{1}{1+\zeta}}\chi_Q\right\|_{L^{p(\cdot)}}\le 1$.
Therefore, by Lemma~\ref{le:3.2.4},
\[
\int_Q t^{\frac{p(x)}{1+\zeta}}\,d\mu(x)
=
\int_X\left(t^{\frac{1}{1+\zeta}}\chi_Q(x)\right)^{p(x)}\,d\mu(x)\le 1,
\]
that is, \eqref{eq:4-1} is fulfilled with $t^{\frac{1}{1+\zeta}}$ in place 
of $t$. The, by Lemma~\ref{le:4}, inequality \eqref{eq:4-2} holds with
$t^{\frac{1}{1+\zeta}}$ in place of $t$, that is,
%%%
\begin{equation}\label{eq:6-6}
\mu(Q)\left(
\frac{1}{\mu(Q)}\int_Q t^{\frac{\lambda p(x)}{1+\zeta}}\,d\mu(x)
\right)^{\frac{1}{\lambda}}
\le 
B\int_Q t^{\frac{p(x)}{1+\zeta}}\,d\mu(x)+\nu(Q).
\end{equation}
%%%
It follows from \eqref{eq:6-5}--\eqref{eq:6-6} and \eqref{eq:6-2} that
%%%
\begin{align*}
\mu(Q) & 
\left(\frac{1}{\mu(Q)}
\int_Q t^{\frac{\lambda p(x)}{1+\zeta}}\,d\mu(x)\right)^{\frac{1}{\lambda}}
\\
&\le 
B\left(\frac{1}{C}\right)^{\frac{1}{1+\zeta}}
\left(\frac{1}{\mu(Q)}\int_Q t^{\frac{\lambda p(x)}{1+\zeta}}\,d\mu(x)
\right)^{\frac{1}{\lambda}}+\nu(Q)
\\
&\le 
\frac{1}{2}
\left(\frac{1}{\mu(Q)}\int_Q t^{\frac{\lambda p(x)}{1+\zeta}}\,d\mu(x)
\right)^{\frac{1}{\lambda}}+\nu(Q).
\end{align*}
%%%
Thus
%%%
\begin{equation}\label{eq:6-7}
\left(\frac{1}{\mu(Q)}\int_Q t^{\frac{\lambda p(x)}{1+\zeta}}\,d\mu(x)
\right)^{\frac{1}{\lambda}}
\le
2\nu(Q).
\end{equation}
%%%
Since $0<t<1$, we have
%%%
\begin{equation}\label{eq:6-8}
t^{\frac{p(x)}{1+\zeta}}=
t^{p(x)}t^{-\frac{\zeta p(x)}{1+\zeta}}
\le 
t^{p(x)}t^{-\frac{\zeta p_- }{1+\zeta}}.
\end{equation}
%%%
Inequalities \eqref{eq:6-7} and \eqref{eq:6-8} imply that
\[
\mu(Q)\left(\frac{1}{\mu(Q)}t^{-\frac{\lambda\zeta p_- }{1+\zeta}}
\int_Q t^{\lambda p(x)}\,d\mu(x)\right)^{\frac{1}{\lambda}}
\le 2\nu(Q),
\]
whence 
%%%
\begin{equation}\label{eq:6-9}
\mu(Q)\left(
\frac{1}{Q}\int_Q t^{\lambda p(x)}\,d\mu(x)
\right)^{\frac{1}{\lambda}}
\le 2t^\eta\nu(Q)
\end{equation}
%%% 
with
\[
\eta:=\frac{\zeta p_-}{1+\zeta}.
\]
Since $1<\gamma<\lambda$, by H\"older's inequality,
%%%
\begin{equation}\label{eq:6-10}
\left(
\frac{1}{\mu(Q)}\int_Q t^{\gamma p(x)}\,d\mu(x)
\right)^{\frac{1}{\gamma}}
\le 
\left(
\frac{1}{\mu(Q)}\int_Q t^{\lambda p(x)}\,d\mu(x)
\right)^{\frac{1}{\lambda}}.
\end{equation}
%%%
Combining \eqref{eq:6-9} and \eqref{eq:6-10}, we arrive at
\[
\mu(Q)\left(
\frac{1}{\mu(Q)}\int_Q t^{\gamma p(x)}\,d\mu(x)
\right)^{\frac{1}{\gamma}}
\le 2t^\eta\nu(Q),
\]
which implies \eqref{eq:6-1} and completes the proof of part (i)
for $0<t<1$.
\end{proof}
%%%----------------------------------------------------------------------------
\section{Proof of Theorem~\ref{th:main}}
\label{sec:proof-main}
It is sufficient to show that if the Hardy-Littlewood maximal operator
$M$ is bounded on the space $L^{p(\cdot)}(X,d,\mu)$, then it is also
bounded on the space $L^{p'(\cdot)}(X,d,\mu)$. In turn, in view of
Corollary~\ref{co:RAE} it is enough to verify the condition $\cA_\infty$.
To achieve this aim, we will apply Lemma~\ref{le:2}. 

Let $\cD\in\bigcup_{t=1}^K\cD^t$ be a dyadic grid, $S\subset\cD$
be a finite sparse family, $\{G_Q\}_{Q\in S}$ be a collection of
nonnegative numbers such that
\[
\left\|\sum_{Q\in S}\alpha_Q\chi_Q\right\|_{L^{p(\cdot)}}=1.
\]
Then for every $Q\in S$,
\[
\alpha_Q\|\chi_Q\|_{L^{p(\cdot)}}
\le 
\left\|\sum_{Q\in S}\alpha_Q\chi_Q\right\|_{L^{p(\cdot)}}=1.
\]

Let $C,\gamma>1$, $\eta>0$ be the constants and $\nu$ be the measure from
Lemma~\ref{le:6}. Suppose $Q\in S$ is such that $\alpha_Q\ge 1$. Applying
H\"older's inequality and Lemma~\ref{le:6}, we get
%%%
\begin{align*}
\int_{G_Q}\alpha_Q^{p(x)}\,d\mu(x)
&=
\frac{\mu(Q)}{\mu(Q)}\int_Q\alpha_Q^{p(x)}\chi_{G_Q}(x)\,d\mu(x)
\\
&\le 
\left(\frac{\mu(G_Q)}{\mu(Q)}\right)^{\frac{1}{\gamma'}}
\mu(Q)\left(
\frac{1}{\mu(Q)}\int_Q\alpha_Q^{\gamma p(x)}\,d\mu(x)
\right)^{\frac{1}{\gamma}}
\\
&\le 
C\left(\frac{\mu(G_Q)}{\mu(Q)}\right)^{\frac{1}{\gamma'}}
\int_Q\alpha_Q^{p(x)}\,d\mu(x).
\end{align*}
%%%
Combining this inequality with Lemma~\ref{le:1}, we get
%%%
\begin{align}\label{eq:main-1}
\sum_{Q\in S\,:\,\alpha_Q\ge 1}\int_{G_Q}\alpha_Q^{p(x)}\,d\mu(x)
&\le 
C\sum_{Q\in S\,:\,\alpha_Q\ge 1}
\left(\frac{\mu(G_Q)}{\mu(Q)}\right)^{\frac{1}{\gamma'}}
\int_Q\alpha_Q^{p(x)}\,d\mu(x)
\\
&\le 
C\left(\max_{Q\in S}\frac{\mu(G_Q)}{\mu(Q)}\right)^{\frac{1}{\gamma'}}
\sum_{Q\in S}\int_Q\alpha_Q^{p(x)}\,d\mu(x)
\nonumber
\\
&\le 
C\left(\max_{Q\in S}\frac{\mu(G_Q)}{\mu(Q)}\right)^{\frac{1}{\gamma'}}.
\nonumber
\end{align}
%%%

For $k\in\N$, put
%%%
\begin{equation}\label{eq:main-2}
S_k:=\{Q\in S\ :\ 2^{-k}\le\alpha_Q <2^{-k+1}\}.
\end{equation}
%%%
If $S_k\ne\emptyset$, then there exists a number $i_k\in\N$
and cubes $Q_1^k,\dots, Q_{i_k}^k$ such that 
$\bigcup_{Q\in S_k}Q=\bigcup_{j=1}^{i_k} Q_j^k$; if 
$i,j\in\{1,\dots,i_k\}$ and $i\ne j$, then $Q_i^k\cap Q_j^k=\emptyset$;
and for every $Q\in S_k$, there is $j\in\{1,\dots,i_k\}$
such that $Q\subset Q_j^k$.

For $k\in\N$ and $S_k\ne\emptyset$, put
%%%
\begin{equation}\label{eq:main-3}
\psi_{Q_j^k}(x)=\sum_{Q\in S_k\,:\,Q\subset Q_j^k}\chi_{G_Q}(x),
\quad
j\in\{1,\dots,i_k\}.
\end{equation}
%%%
Then, taking into account \eqref{eq:main-2}, one has for all 
$j\in\{1,\dots,i_k\}$,
%%%
\begin{align}\label{eq:main-4}
\sum_{Q\in S_k\,:\,Q\subset Q_j^k}
\int_{G_Q}\alpha_Q^{p(x)}\,d\mu(x)
&= 
\sum_{Q\in S_k\,:\,Q\subset Q_j^k}
\int_{Q_j^k}\alpha_Q^{p(x)}\chi_{G_Q}(x)\,d\mu(x)
\\
&\le 
\sum_{Q\in S_k\,:\,Q\subset Q_j^k}
\int_{Q_j^k} (2^{-k+1})^{p(x)}\chi_{G_Q}(x)\,d\mu(x)
\nonumber\\
&=
\int_{Q_j^k}(2^{-k+1})^{p(x)}\psi_{Q_j^k}(x)\,d\mu(x)
\nonumber\\
&\le 
2^{p_+}\int_{Q_j^k}2^{-kp(x)}\psi_{Q_j^k}(x)\,d\mu(x)
\nonumber\\
&\le 
2^{p_+}\int_{Q_j^k}\alpha_{Q_j^k}^{p(x)}\psi_{Q_j^k}(x)\,d\mu(x).
\nonumber
\end{align}
%%%
By H\"older's inequality, for $k\in\N$ such that $S_k\ne\emptyset$ and 
$j\in\{1,\dots,i_k\}$, one has
%%%
\begin{align}\label{eq:main-5}
\int_{Q_j^k}\alpha_{Q_j^k}^{p(x)}\psi_{Q_j^k}(x)\,d\mu(x)
\le &
\mu(Q_j^k)
\left(\frac{1}{\mu(Q_j^k)}
\int_{Q_j^k}\alpha_{Q_j^k}^{\gamma p(x)}\,d\mu(x)
\right)^{\frac{1}{\gamma}}
\\
&\times
\left(\frac{1}{\mu(Q_j^k)}
\int_{Q_j^k}\psi_{Q_j^k}^{\gamma'}(x)\,d\mu(x)
\right)^{\frac{1}{\gamma'}}.
\nonumber
\end{align}
%%%
It follows from \eqref{eq:main-2} and the hypothesis that the sets
$\{G_Q\}_{Q\in S}$ are pairwise disjoint that
%%%
\begin{align}\label{eq:main-6}
\int_{Q_j^k}\psi_{Q_j^k}^{\gamma'}(x)\,d\mu(x)
&=
\sum_{Q\in S_k\,:\,Q\subset Q_j^k}\mu(G_Q)
\\
&\le 
\left(\max_{Q\in S}\frac{\mu(G_Q}{\mu(Q)}\right) 
\sum_{Q\in S_k\,:\, Q\subset Q_j^k}\mu(Q).
\nonumber
\end{align}
%%%
Since $S$ is a sparse family, there exists a collection of pairwise disjoint
sets $\{E(Q)\}_{Q\in S}$ such that $E(Q)\subset Q$ and $\mu(Q)\le2\mu(E(Q))$.
Hence, for all $k\in\N$ such that $S_k\ne\emptyset$ and all
$j\in\{1,\dots,i_k\}$,
%%%
\begin{align}\label{eq:main-7}
\sum_{Q\in S_k\,:\,Q\subset Q_j^k}\mu(Q) 
&\le 
2\sum_{Q\in S_k\,:\,Q\subset Q_j^k}\mu(E(Q))
\\
&=2\mu\left(\bigcup_{Q\in S_k\,:\,Q\subset Q_j^k}E(Q)\right)
\le 2\mu(Q_j^k).
\nonumber
\end{align}
%%%
On the other hand, taking into account that $\alpha_{Q_j^k}<1$, we deduce
from Lemma~\ref{le:6} that
%%%
\begin{equation}\label{eq:main-8}
\mu(Q_j^k)\left(\frac{1}{\mu(Q_j^k)}
\int_{Q_j^k}\alpha_{Q_j^k}\alpha_{Q_j^k}^{\gamma p(x)}\,d\mu(x)
\right)^{\frac{1}{\gamma}}
\le 
C\int_{Q_j^k}\alpha_{Q_j^k}^{p(x)}\,d\mu(x)
+
2\alpha_{Q_j^k}^\eta\nu(Q_j^k).
\end{equation}
%%%
Combining \eqref{eq:main-4}--\eqref{eq:main-8}, we obtain for every $k\in\N$
such that $S_k\ne\emptyset$ and every $j\in\{1,\dots,i_k\}$,
%%%
\begin{align*}
&
\sum_{Q\in S_k\,:\,Q\subset Q_j^k}\int_{G_Q}\alpha_Q^{p(x)}\,d\mu(x)
\\
&\quad\quad\quad\le
2^{p_++\frac{1}{\gamma'}}
\left(\frac{\mu(G_Q)}{\mu(Q)}\right)^{\frac{1}{\gamma'}}
\left(
C\int_{Q_j^k}\alpha_{Q_j^k}^{p(x)}\,d\mu(x)+2\alpha_{Q_j^k}^\eta\nu(Q_j^k)
\right).
\end{align*}
%%%
Then
%%%
\begin{align}\label{eq:main-9}
&
\sum_{Q\in S\,:\,\alpha_Q<1}\int_{G_Q}\alpha_Q^{p(x)}\,d\mu(x)
\\
&\quad\quad
=
\sum_{k\in\N\,:\, S_k\ne\emptyset}\sum_{Q\in S_k}
\int_{G_Q}\alpha_Q^{p(x)}\,d\mu(x)
\nonumber\\
&\quad\quad
=
\sum_{k\in\N\,:\, S_k\ne\emptyset}\sum_{j=1}^{i_k}
\sum_{Q\in S_k\,:\,Q\subset Q_j^k}
\int_{G_Q}\alpha_Q^{p(x)}\,d\mu(x)
\nonumber
\end{align}
\begin{align*}
&\quad\quad 
\le
2^{p_++\frac{1}{\gamma'}}
\left(\max_{Q\in S}\frac{\mu(G_Q)}{\mu(Q)}\right)^{\frac{1}{\gamma'}}
\nonumber\\
&\quad\quad\quad
\times
\sum_{k\in\N\,:\, S_k\ne\emptyset}\sum_{j=1}^{i_k}
\left(
C\int_{Q_j^k}\alpha_{Q_j^k}^{p(x)}\,d\mu(x)
+
2\alpha_{Q_j^k}^\eta\nu(Q_j^k)\right).
\nonumber
\end{align*}
%%%
It follows from Lemma~\ref{le:1} that
%%%
\begin{equation}\label{eq:main-10}
\sum_{k\in\N\,:\, S_k\ne\emptyset}\sum_{j=1}^{i_k}
\int_{Q_j^k}\alpha_{Q_j^k}^{p(x)}\,d\mu(x)
\le 
\sum_{Q\in S}\int_Q\alpha_Q^{p(x)}\,d\mu(x)\le 1.
\end{equation}
%%%
Since for every fixed $k$, the cubes $Q_1^k,\dots, Q_{i_k}^k$ are
pairwise disjoint, it follows from Lemma~\ref{le:6}(ii) and \eqref{eq:main-2}
that
%%%
\begin{align}\label{eq:main-11}
\sum_{k\in\N\,:\, S_k\ne\emptyset}\sum_{j=1}^{i_k}
\alpha_{Q_j^k}^\eta\nu(Q_j^k)
&\le 
\sum_{k\in\N\,:\, S_k\ne\emptyset}
(2^{-k+1})^\eta\sum_{j=1}^{i_k}\nu(Q_j^k)
\\
&\le 
C\sum_{k\in\N\,:\, S_k\ne\emptyset}2^{(-k+1)\eta}
\nonumber\\
&\le 
C2^\eta\sum_{k=1}^\infty\left(\frac{1}{2^\eta}\right)^k=:C_1.
\nonumber
\end{align}
%%%
It follows from \eqref{eq:main-9}--\eqref{eq:main-11} that
%%%
\begin{equation}\label{eq:main-12}
\sum_{Q\in S\,:\,\alpha_Q<1}\int_{G_Q}\alpha_Q^{p(x)}\,d\mu(x)
\le 
2^{p_++\frac{1}{\gamma'}}(C+C_1)
\left(\max_{Q\in S}\frac{\mu(G_Q)}{\mu(Q)}\right)^{\frac{1}{\gamma'}}.
\end{equation}
%%%
Combining \eqref{eq:main-1} and \eqref{eq:main-12}, we see that
\[
\sum_{Q\in S}\int_{G_Q}\alpha_Q^{p(x)}\,d\mu(x)
\le 
\Psi\left(\max_{Q\in S}\frac{\mu(G_Q)}{\mu(Q)}\right)^\xi
\]
with $\Psi:=C+2^{p_++\frac{1}{\gamma'}}(C+2C_1)$ and $\xi=1/\gamma'$. 
Hence \eqref{eq:2-1} implies \eqref{eq:2-2}.
By Lemma~\ref{le:2}, the space $L^{p(\cdot)}(X,d,\mu)$ satisfies the condition
$\cA_\infty$. Thus, the Hardy-Littlewood maximal operator $M$ is bounded
on the variable Lebesgue space $L^{p'(\cdot)}(X,d,\mu)$ in view of 
Corollary~\ref{co:RAE}.
\qed
%%%----------------------------------------------------------------------------
\subsection*{Acknowledgments}
This work was partially supported by the Funda\c{c}\~ao para a Ci\^encia e a
Tecnologia (Portu\-guese Foundation for Science and Technology)
through the project UID/MAT/00297/2013 (Centro de Matem\'atica e 
Aplica\c{c}\~oes).

%%%----------------------------------------------------------------------------
\normalsize


\begin{thebibliography}{KMRS16}
\baselineskip=17pt

\bibitem[AHT17]{AHT17}
T. C. Anderson, T. Hyt\"onen, and T. Tapiola, 
\emph{Weak $A_\infty$ weights and weak reverse H\"older property in a space of 
homogeneous type},
J. Geom. Anal. 27 (2017), 95--119.

\bibitem[AW18]{AW18}
T. C. Anderson and D. E. Weirich,
\emph{A dyadic Gehring inequality in spaces of homogeneous type and applications},
New York J. Math. 24 (2018), 1--19.

\bibitem[BS88]{BS88} 
C. Bennett and R. Sharpley, 
\emph{Interpolation of Operators}, 
Academic Press, Boston, 1988. 

\bibitem[C90a]{C90a}
M. Christ, 
\emph{A $T(b)$ theorem with remarks on analytic capacity and the Cauchy integral},
Colloq. Math. 60/61 (1990), 601--628.

\bibitem[C90b]{C90b}
M. Christ, 
\emph{Lectures on Singular Integral Operators}, 
Expository lectures from the CBMS regional conference held at the University 
of Montana, Missoula, MT (USA) from August 28-September 1, 1989. 
Regional Conference Series in Mathematics,  77. 
American Mathematical Society, Providence, RI, 1990.

\bibitem[CF13]{CF13}
D. Cruz-Uribe and A. Fiorenza,
\emph{Variable Lebesgue Spaces},
Birkh\"auser, Basel, 2013.

\bibitem[CS18]{CS18}
D. Cruz-Uribe and P. Shukla, 
\emph{The boundedness of fractional maximal operators on variable Lebesgue 
spaces over spaces of homogeneous type},
Studia Math. 242 (2018), 109--139.

\bibitem[D05]{D05}
L. Diening, 
\emph{Maximal functions on Musielak--Orlicz spaces and generalized 
Lebesgue spaces},
Bull. Sci. Math. 129 (2005), 657--700.

\bibitem[DHHR11]{DHHR11}
L. Diening, P. Harjulehto, P. H\"ast\"o, and M. R\r{u}\v zi\v cka,
\emph{Lebesgue and Sobolev Spaces with Variable Exponents},
Lecture Notes in Mathematics 2017, Sprin\-ger, Berlin, 2011.

\bibitem[HK12]{HK12}
T. Hyt\"onen and A. Kairema, 
\emph{Systems of dyadic cubes in a doubling metric space},
Colloq. Math. 126 (2012), 1--33.

\bibitem[K19]{K19} 
A. Yu. Karlovich,
\emph{Hardy-Littlewood maximal operator on the associate space of a Banach 
function space},
Real Analysis Exchange 44 (2019), 119--140.

\bibitem[KMRS16]{KMRS16}
V. Kokilashvili, A. Meskhi, H. Rafeiro, and S. Samko, 
\emph{Integral Operators in Non-Standard Function Spaces. 
Volume 1: Variable Exponent Lebesgue and Amalgam Spaces},
Birkh\"auser, Basel, 2016.

\bibitem[L17]{L17} 
A. K. Lerner, 
\emph{On a dual property of the maximal operator on weighted variable $L^p$ 
spaces},
Contemp. Math. 693 (2017), 288--300.

\bibitem[M11]{M11}
L. Maligranda,
\emph{Hidegoro Nakano (1909-1974) -- on the centenary of his birth},
M. Kato, (ed.) et al., 
Proceedings of the 3rd international symposium on Banach and function 
spaces (ISBFS 2009), 
Kitakyushu, Japan, September 14--17, 2009. Yokohama, Yokohama Publishers,
pp.~99--171, 2011.
\end{thebibliography}
\end{document}